\theoremstyle{plain} 
\newtheorem{theorem}{\indent\sc Theorem}[section]
\newtheorem{lemma}[theorem]{\indent\sc Lemma}
\newtheorem{corollary}[theorem]{\indent\sc Corollary}
\newtheorem{proposition}[theorem]{\indent\sc Proposition}
\theoremstyle{definition} 
\newtheorem{definition}[theorem]{\indent\sc Definition}
\newtheorem{remark}[theorem]{\indent\sc Remark}
\newtheorem{example}[theorem]{\indent\sc Example}
\title{Invariant, anti-invariant and slant submanifolds \\of a metallic Riemannian manifold} 
\author{Adara M. Blaga and Cristina E. Hre\c tcanu}
\date{}
\begin{document}
\maketitle

\markboth{{\small\it {\hspace{4cm} Slant submanifolds of a metallic Riemannian manifold}}}{\small\it{Slant submanifolds of a metallic Riemannian manifold
\hspace{4cm}}}

\footnote{ 
2010 \textit{Mathematics Subject Classification}.
53C07, 53C25, 53C40, 53C42.
}
\footnote{ 
\textit{Key words and phrases}.
invariant, anti-invariant and slant submanifold; metallic Riemannian structure.
}

\begin{abstract}
Properties of invariant, anti-invariant and slant isometrically immersed submanifolds of metallic Riemannian manifolds are given with a special view towards the induced $\Sigma$-structure. Examples of such metallic manifolds are also given.
\\[2mm] {\it AMS Mathematics  Subject Classification $(2010)$}: 53C07, 53C25, 53C40, 53C42
\\[1mm] {\it Key words and phrases:} invariant, anti-invariant and slant submanifold; metallic Riemannian structure
\end{abstract}

\section{Introduction}

The theory of submanifolds has the origin in the study of the geometry of plane curves initiated by Fermat. Since then it has been evolving in different directions of differential geometry and mechanics, especially. It is still an active and vast research field playing an important role in the development of modern differential geometry. The modeling spaces of dynamical systems always carry different canonical geometrical objects: affine connections, differential forms, tensor fields etc. A natural question arising is when the submanifold inherits the geometrical structures of the ambient manifold. In this spirit, we shall consider a certain kind of isometrically immersed submanifolds of metallic Riemannian manifolds, namely, \textit{slant} submanifolds. First time, the notion of slant submanifold appeared for complex manifolds in Chen's book \cite{chen2}. Remark that the slant submanifolds have been studied in different other context: for contact \cite{lott1}, LP-contact \cite{khan2}, K-contact \cite{lott2}, K\"{a}hler \cite{chen1}, Sasakian \cite{cabr}, Lorentzian \cite{per}, Kenmotsu \cite{gup}, para-Kenmotsu \cite{blag}, almost product Riemannian manifolds \cite{sah}, almost paracontact metric manifolds \cite{we} etc.

We shall begin recalling the basic properties of a metallic Riemannian structure and prove some immediate consequences of the Gauss and Weingarten equations for an isometrically immersed submanifold in a metallic Riemannian manifold $(\bar{M},J, g)$.  We also consider the $\Sigma$-structure induced by a metallic Riemannian structure on its submanifolds and establish a kind of inheritance property to the submanifolds of isometrically immersed submanifolds of $\bar{M}$. In the main section we characterize the invariant, anti-invariant and slant submanifolds of $\bar{M}$.

\section{Metallic Riemannian manifolds revisited}

\begin{definition} \cite{c}
A $(1,1)$-tensor field $J$ is called \textit{metallic structure} on $M$ if it satisfies the equation:
\begin{equation}
J^2=pJ+qI_{\Gamma(TM)},
\end{equation}
for $p$, $q\in\mathbb{N}^*$, where $I_{\Gamma(TM)}$ is the identity operator on $\Gamma(TM)$. The pair $(M, J)$ is a {\it metallic manifold}. Moreover, if a Riemannian metric $g$ on $M$ is compatible with $J$, that is $g(JX, Y)=g(X, JY)$, for any $X$, $Y\in \Gamma(TM)$, we call the pair $(J,g)$ {\it metallic Riemannian structure} and $(M, J,g)$ {\it metallic Riemannian manifold}.
\end{definition}

It was shown \cite{c} that the powers of $J$ satisfy:
\begin{equation}
J^n=g_{n}J+qg_{n-1}I_{\Gamma(TM)},
\end{equation}
where $\{g_n\}_{n\in\mathbb{N}^*}$ is the generalized secondary Fibonacci sequence defined by $g_{n+1}=pg_n+qg_{n-1}$, $n\geq 1$ with $g_0=0$, $g_1=1$ and $p$, $q$ real numbers.

\begin{remark}
Concerning the inheriting of this kind of structure on submanifolds, Hre\c tcanu and Crasmareanu proved in \cite{c} that a metallic structure on a metallic Riemannian manifold $M$ induced a metallic structure on every invariant submanifold of $M$ and illustrate this on a product of spheres in an Euclidian space.
\end{remark}

Fix now $J$ a metallic structure on $M$ and define the associated linear connections as follows:

\begin{definition}

i) A linear connection $\nabla$ on $M$ is called $J$-{\it connection} if $J$ is covariant constant with respect to $\nabla$,
namely $\nabla J=0$.

ii) If the Levi-Civita connection $\nabla$ with respect to a Riemannian metric $g$ on $M$ compatible with $J$ is $J$-connection, then $(M,J,g)$ is called \textit{locally metallic Riemannian manifold}.
\end{definition}

The concept of integrability is defined in the classical manner:

\begin{definition}
A metallic structure $J$ is called {\it integrable} if its Nijenhuis tensor field
$N_{J}(X, Y):=[JX, JY]-J[JX, Y]-J[X, JY] +J^{2}[X, Y]$ vanishes.
\end{definition}

Necessary and sufficient conditions for the integrability of a polynomial structure $J$ whose characteristic polynomial has only simple roots were given by Vanzura in \cite{v} who proved that if there exists a symmetric linear $J$-connection $\nabla$, then the structure $J$ is integrable.

\section{Submanifolds of metallic Riemannian manifolds}

\subsection{Isometrically immersed submanifolds}

We shall focus on a certain kind of isometrically immersed submanifolds of metallic Riemannian manifolds, namely, \textit{slant} submanifolds.

Let $M$ be an $n$-dimensional submanifold of codimension $r$ isometrically immersed in an $(n+r)$-dimensional metallic Riemannian manifold $(\bar{M},J,g)$ ($n$, $r\in\mathbb{N}^*$). Then for each $x\in M$, the tangent space $T_x\bar{M}$ of $\bar{M}$ decomposes into the direct sum:
$$T_x\bar{M}=T_xM\oplus T_xM^{\perp}.$$

Denote by:
\begin{equation}
T:\Gamma(TM)\rightarrow \Gamma(TM), \ \ TX:=(J X)^T,
\end{equation}
\begin{equation}
N:\Gamma(TM)\rightarrow \Gamma(TM^{\perp}), \ \ NX:=(J X)^{\perp},
\end{equation}
\begin{equation}
t:\Gamma(TM^{\perp})\rightarrow \Gamma(TM), \ \ tU:=(J U)^T,
\end{equation}
\begin{equation}
n:\Gamma(TM^{\perp})\rightarrow \Gamma(TM^{\perp}), \ \ nU:=(J U)^{\perp}.
\end{equation}

Remark that the maps $T$ and $n$ are $g$-symmetric:
\begin{equation}
g(TX,Y)=g(X,TY), \ \ X, Y\in \Gamma(TM),
\end{equation}
\begin{equation}
g(nU,V)=g(U,nV), \ \ U, V\in \Gamma(TM^{\perp})
\end{equation}
and
\begin{equation}
g(NX,U)=g(X,tU), \ \ X\in \Gamma(TM), U\in \Gamma(TM^{\perp}).
\end{equation}

Denoting also by $g$ the Riemannian metric induced on $M$, by $\bar{\nabla}$ and $\nabla$ the Levi-Civita connections on $(\bar{M},g)$ and $(M,g)$ respectively and by $\{N_1,\dots,N_r\}$ an orthonormal basis for the normal space, the Gauss and Weingarten formulas corresponding to $M$ are given by:
\begin{equation}
\bar{\nabla}_{X}Y=\nabla_XY+\sum_{\alpha=1}^rh_{\alpha}(X,Y)N_{\alpha},
\end{equation}
\begin{equation}\label{e5}
\bar{\nabla}_{X}N_{\alpha}=-A_{N_{\alpha}}X+\sum_{\beta=1}^r\lambda_{\alpha\beta}(X)N_{\beta},
\end{equation}
where $h_{\alpha}$, $1\leq \alpha \leq r$, are the (symmetric) second fundamental tensors corresponding to $N_{\alpha}$, i.e. $h(X,Y)=\sum_{\alpha=1}^rh_{\alpha}(X,Y)N_{\alpha}$, for $X$, $Y\in\Gamma(TM)$, $A_{N_{\alpha}}$ is the shape operator (or the Weingarten map) in the direction of the normal vector field $N_{\alpha}$ defined by $g(A_{N_{\alpha}}X,Y)=h_{\alpha}(X,Y)$, for $X$, $Y\in\Gamma(TM)$, $1\leq \alpha \leq r$ and $\lambda_{\alpha\beta}=-\lambda_{\beta\alpha}$, $1\leq \alpha,\beta \leq r$, the $1$-forms on $M$ corresponding to the normal connection $\nabla^{\perp}$, i.e. $\nabla^{\perp}_XN_{\alpha}=\sum_{\beta=1}^r\lambda_{\alpha\beta}(X)N_{\beta}$.

Also from $g(JX,Y)=g(X,JY)$ follows $g((\bar{\nabla}_XJ)Y,Z)=g(Y,(\bar{\nabla}_XJ)Z)$, for any $X$, $Y$, $Z\in \Gamma(T\bar{M})$.

\begin{proposition}
If $M$ is an isometrically immersed submanifold of the metallic Riemannian manifold $(\bar{M},J, g)$, then $g((\nabla_XT)Y,Z)=g(Y,(\nabla_XT)Z)$, for any $X$, $Y$, $Z\in \Gamma(TM)$.
\end{proposition}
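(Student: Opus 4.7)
The plan is to prove this purely from two ingredients already established in the excerpt: the $g$-symmetry of $T$ (equation with $g(TX,Y) = g(X,TY)$), and the metric compatibility of the induced Levi-Civita connection $\nabla$ on $M$. Neither the Gauss-Weingarten formulas nor the ambient identity $g((\bar\nabla_X J)Y,Z) = g(Y,(\bar\nabla_X J)Z)$ need to be invoked: the statement really says that \emph{$g$-symmetry of an endomorphism is inherited by its covariant derivative under a metric connection}.

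Concretely, I would first expand
$$g\bigl((\nabla_X T)Y, Z\bigr) = g(\nabla_X(TY), Z) - g(T\nabla_X Y, Z),$$
and use the symmetry of $T$ to rewrite the second term as $g(\nabla_X Y, TZ)$. I would then apply $X$ to the identity $g(TY,Z) = g(Y,TZ)$, which by metric compatibility of $\nabla$ gives
$$g(\nabla_X(TY), Z) + g(TY, \nabla_X Z) \;=\; g(\nabla_X Y, TZ) + g(Y, \nabla_X(TZ)).$$
Rearranging yields
$$g(\nabla_X(TY), Z) - g(\nabla_X Y, TZ) \;=\; g(Y, \nabla_X(TZ)) - g(TY, \nabla_X Z),$$
where the left-hand side is exactly $g((\nabla_X T)Y, Z)$ by the previous step. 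For the right-hand side, I would use symmetry of $T$ again to replace $g(TY,\nabla_X Z)$ by $g(Y, T\nabla_X Z)$, recognising the result as $g(Y, \nabla_X(TZ) - T\nabla_X Z) = g(Y, (\nabla_X T)Z)$, which closes the argument.

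There is no serious obstacle: the computation is a three-line consequence of symmetry plus $\nabla g = 0$. The only care needed is bookkeeping — making sure the boundary terms $g(TY,\nabla_X Z)$ and $g(\nabla_X Y, TZ)$ are transported to the correct side via the symmetry of $T$ so that the two $(\nabla_X T)$-expressions assemble cleanly. A slightly slicker phrasing, which I might adopt in the write-up, is to simply observe that for any $g$-symmetric $(1,1)$-tensor $S$ and any metric connection $\nabla$, the derivative $\nabla_X S$ is itself $g$-symmetric; applying this with $S = T$ gives the proposition immediately.
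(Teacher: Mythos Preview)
Your proof is correct and takes a genuinely different route from the paper. The paper's argument goes through the ambient structure: it expands $(\bar{\nabla}_XJ)Y$ via the Gauss--Weingarten formulas, projects onto $Z\in\Gamma(TM)$ to obtain
\[
g((\bar{\nabla}_XJ)Y,Z)=g((\nabla_XT)Y,Z)-\sum_{\alpha=1}^r\bigl[g(Y,JN_{\alpha})h_{\alpha}(X,Z)+g(JN_{\alpha},Z)h_{\alpha}(X,Y)\bigr],
\]
observes that the correction sum is symmetric in $Y,Z$, and then invokes the ambient identity $g((\bar{\nabla}_XJ)Y,Z)=g(Y,(\bar{\nabla}_XJ)Z)$ to conclude. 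Your argument bypasses all of this: you use only the intrinsic data $g(TX,Y)=g(X,TY)$ and $\nabla g=0$, proving in effect the general fact that the covariant derivative of a $g$-symmetric endomorphism under a metric connection is again $g$-symmetric. This is shorter, more conceptual, and makes clear that the result has nothing to do with the metallic structure or the immersion. The paper's longer computation does have a payoff, though: the expansion of $(\bar{\nabla}_XJ)Y$ it produces is recorded separately and reused in later propositions, so in context that extra work is not wasted.
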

\begin{proof}
$$(\bar{\nabla}_XJ)Y=\bar{\nabla}_XTY+\bar{\nabla}_XNY-J(\nabla_XY+\sum_{\alpha=1}^rh_{\alpha}(X,Y)N_{\alpha})=$$$$=
(\nabla_XT)Y-\sum_{\alpha=1}^rg(NY,N_{\alpha})A_{N_{\alpha}}X-\sum_{\alpha=1}^rh_{\alpha}(X,Y)tN_{\alpha}+\sum_{\alpha=1}^rh_{\alpha}(X,TY)N_{\alpha}+
$$$$+\sum_{\alpha=1}^rX(g(NY,N_{\alpha}))N_{\alpha}+\sum_{1\leq \alpha, \beta \leq r}g(NY,N_{\beta})\lambda_{\beta\alpha}(X)N_{\alpha}-
\sum_{\alpha=1}^rh_{\alpha}(X,Y)nN_{\alpha}-N(\nabla_XY).$$

Since $NY=\sum_{\alpha=1}^rg(NY,N_{\alpha})N_{\alpha}$ and $g(NY,N_{\alpha})=g(JY,N_{\alpha})=g(Y,JN_{\alpha})$, we get:
$$g((\bar{\nabla}_XJ)Y,Z)=g((\nabla_XT)Y,Z)-\sum_{\alpha=1}^rg(NY,N_{\alpha})g(A_{N_{\alpha}}X,Z)-\sum_{\alpha=1}^rh_{\alpha}(X,Y)g(tN_{\alpha},Z)=$$
$$=g((\nabla_XT)Y,Z)-\sum_{\alpha=1}^r[g(Y,JN_{\alpha})h_{\alpha}(X,Z)+g(JN_{\alpha},Z)h_{\alpha}(X,Y)]$$
and from $g((\bar{\nabla}_XJ)Y,Z)=g(Y,(\bar{\nabla}_XJ)Z)$, we obtain:
$$g((\nabla_XT)Y,Z)-g(Y,(\nabla_XT)Z)=g((\bar{\nabla}_XJ)Y,Z)-g(Y,(\bar{\nabla}_XJ)Z)=0.$$
\end{proof}

\begin{proposition}\label{p3}
If $M$ is an isometrically immersed submanifold of the locally metallic Riemannian manifold $(\bar{M},J, g)$, then:
\begin{enumerate}
  \item for any $X$, $Y\in \Gamma(TM)$, we have:
\begin{enumerate}
  \item $(\nabla_XT)Y=\sum_{\alpha=1}^rg(NY,N_{\alpha})A_{N_{\alpha}}X+\sum_{\alpha=1}^rh_{\alpha}(X,Y)tN_{\alpha}$,
  \item $\sum_{\alpha=1}^rg(NY,N_{\alpha})\nabla^{\perp}_XN_{\alpha}=N(\nabla_XY)+\sum_{\alpha=1}^rh_{\alpha}(X,Y)nN_{\alpha}-  \sum_{\alpha=1}^rh_{\alpha}(X,TY)N_{\alpha}-
      \sum_{\alpha=1}^rX(g(NY,N_{\alpha}))N_{\alpha}$;
\end{enumerate}
   \item for any $X\in \Gamma(TM)$, $U\in \Gamma(TM^{\perp})$, we have:
\begin{enumerate}
  \item $\nabla_XtU=\sum_{\alpha=1}^rg(nU,N_{\alpha})[A_{N_{\alpha}}X-T(A_{N_{\alpha}}X)]+\sum_{\alpha=1}^r[X(g(U,N_{\alpha}))+\\
      \sum_{\beta=1}^rg(U,N_{\beta})\lambda_{\beta\alpha}(X)]tN_{\alpha}$,
  \item $\sum_{\alpha=1}^rg(nU,N_{\alpha})\nabla^{\perp}_XN_{\alpha}=-\sum_{\alpha=1}^r[X(g(nU,N_{\alpha}))+h_{\alpha}(X,tU)]N_{\alpha}+\\
      \sum_{\alpha=1}^r[X(g(U,N_{\alpha}))+
      \sum_{\beta=1}^rg(U,N_{\beta})\lambda_{\beta\alpha}(X)]nN_{\alpha}- 
      \sum_{\alpha=1}^rg(nU,N_{\alpha})N(A_{N_{\alpha}}X)$.
\end{enumerate}
\end{enumerate}
\end{proposition}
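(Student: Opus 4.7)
The plan is to exploit the locally metallic hypothesis, which says exactly that $\bar\nabla J=0$, so that every covariant derivative $(\bar\nabla_X J)W$ vanishes identically for $X\in\Gamma(TM)$ and $W\in\Gamma(T\bar M)$. Each of the four identities then drops out by computing $(\bar\nabla_X J)W$ through Gauss--Weingarten and separating the tangential and normal components.

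For part (1), I would not redo the computation: the previous proposition already expanded $(\bar\nabla_X J)Y$ for $Y\in\Gamma(TM)$ into a concrete sum of tangential and normal pieces (the lines ending with ``$-N(\nabla_X Y)$''). Setting that whole expression equal to $0$ and collecting the terms in $\Gamma(TM)$ on one side gives (1a) directly, while the terms in $\Gamma(TM^{\perp})$ give (1b) after recognizing that
\[
\sum_{\alpha,\beta}g(NY,N_\beta)\lambda_{\beta\alpha}(X)N_\alpha+\sum_\alpha X(g(NY,N_\alpha))N_\alpha=\sum_\alpha g(NY,N_\alpha)\nabla^{\perp}_X N_\alpha,
\]
which is just the product rule applied to $NY=\sum_\alpha g(NY,N_\alpha)N_\alpha$ inside the normal connection.

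For part (2), I would run the analogous computation with a normal argument $U\in\Gamma(TM^{\perp})$. Write $JU=tU+nU$ and expand
\[
(\bar\nabla_X J)U=\bar\nabla_X tU+\bar\nabla_X nU-J(\bar\nabla_X U).
\]
I would apply Gauss to $\bar\nabla_X tU=\nabla_X tU+\sum_\alpha h_\alpha(X,tU)N_\alpha$, and Weingarten together with the expansion $nU=\sum_\alpha g(nU,N_\alpha)N_\alpha$ and $U=\sum_\alpha g(U,N_\alpha)N_\alpha$ to rewrite $\bar\nabla_X nU$ and $\bar\nabla_X U$. Then I would decompose $J$ applied to each tangent/normal summand through $J=T+N$ on $\Gamma(TM)$ and $J=t+n$ on $\Gamma(TM^{\perp})$. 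Equating the tangential part of $(\bar\nabla_X J)U=0$ produces (2a) and equating the normal part produces (2b), again using the product rule identity above to collapse $\sum_\alpha X(g(nU,N_\alpha))N_\alpha + \sum_{\alpha,\beta}g(nU,N_\beta)\lambda_{\beta\alpha}(X)N_\alpha$ into $\sum_\alpha g(nU,N_\alpha)\nabla^{\perp}_X N_\alpha$ on the appropriate side.

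The conceptual content is very light—once $\bar\nabla J=0$ is invoked, everything is determined. The only real obstacle is bookkeeping: keeping the double sums over $\alpha,\beta$ straight, correctly splitting each $J(\cdot)$ through $T,N$ or $t,n$ depending on whether its argument is tangential or normal, and sorting the resulting $\sim 8$ terms into the tangent bundle part and the normal bundle part without losing any. I expect one has to be especially careful with the term $J(-A_U X)=-T(A_U X)-N(A_U X)$ (which produces a tangential piece that feeds (2a) and a normal piece that feeds (2b)) and symmetrically with $J(\nabla^{\perp}_X U)=t(\nabla^{\perp}_X U)+n(\nabla^{\perp}_X U)$, since these are the terms that couple the inner products $g(U,N_\alpha)$ with $t N_\alpha$ and $n N_\alpha$ and are the source of the compound coefficients $X(g(U,N_\alpha))+\sum_\beta g(U,N_\beta)\lambda_{\beta\alpha}(X)$ appearing in the final formulas.
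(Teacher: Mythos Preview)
Your plan matches the paper's proof exactly: expand $(\bar\nabla_X J)Y$ and $(\bar\nabla_X J)U$ via Gauss--Weingarten, set them to zero using the locally metallic hypothesis, and split into tangential and normal parts. The paper does precisely this, writing out $\bar\nabla_X NY$, $\bar\nabla_X nU$, $J(\bar\nabla_X Y)$, $J(\bar\nabla_X U)$ term by term before separating.

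One bookkeeping slip to flag: your displayed ``product rule'' identity is misstated. From the definition $\nabla^{\perp}_X N_\alpha=\sum_\beta\lambda_{\alpha\beta}(X)N_\beta$ alone one has
\[
\sum_{\alpha,\beta}g(NY,N_\beta)\lambda_{\beta\alpha}(X)N_\alpha=\sum_\alpha g(NY,N_\alpha)\nabla^{\perp}_X N_\alpha,
\]
\emph{without} the derivative term on the left. The summand $\sum_\alpha X(g(NY,N_\alpha))N_\alpha$ is not absorbed into the left-hand side of (1b); it remains as a separate term on the right-hand side (look at the statement: it is there with a minus sign). The same remark applies verbatim to the $g(nU,N_\alpha)$ version you mention for (2b). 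This does not affect the strategy at all---only the final placement of that one term---but if you execute the ``collapse'' as you wrote it, you will arrive at formulas that differ from (1b) and (2b) by exactly that summand.
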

\begin{proof}
\begin{enumerate}
 \item For $X$, $Y\in \Gamma(TM)$:
$$NY=\sum_{\alpha=1}^rg(NY,N_{\alpha})N_{\alpha}$$
$$\bar{\nabla}_XNY=\sum_{\alpha=1}^rX(g(NY,N_{\alpha}))N_{\alpha}-\sum_{\alpha=1}^rg(NY,N_{\alpha})A_{N_{\alpha}}X+
\sum_{1\leq \alpha,\beta\leq r}g(NY,N_{\alpha})\lambda_{\alpha\beta}(X)N_{\beta}$$
$$J(\bar{\nabla}_XY)=J(\nabla_XY)+\sum_{\alpha=1}^rh_{\alpha}(X,Y)(tN_{\alpha}+nN_{\alpha})=$$$$=T(\nabla_XY)+N(\nabla_XY)+
\sum_{\alpha=1}^rh_{\alpha}(X,Y)(tN_{\alpha}+nN_{\alpha}).$$

It follows:
$$0=(\bar{\nabla}_XJ)Y=\bar{\nabla}_XTY+\bar{\nabla}_XNY-J(\bar{\nabla}_XY)=$$$$=
[\nabla_XTY-\sum_{\alpha=1}^rg(NY,N_{\alpha})A_{N_{\alpha}}X-T(\nabla_XY)-\sum_{\alpha=1}^rh_{\alpha}(X,Y)tN_{\alpha}]+$$
$$+[\sum_{\alpha=1}^rh_{\alpha}(X,TY)N_{\alpha}+\sum_{\alpha=1}^rX(g(NY,N_{\alpha}))N_{\alpha}
+$$$$+\sum_{1\leq \alpha,\beta\leq r}g(NY,N_{\alpha})\lambda_{\alpha\beta}(X)N_{\beta}-N(\nabla_XY)-\sum_{\alpha=1}^rh_{\alpha}(X,Y)nN_{\alpha}].$$
 \item For $X\in \Gamma(TM)$, $U\in \Gamma(TM^{\perp})$:
$$U=\sum_{\alpha=1}^rg(U,N_{\alpha})N_{\alpha}, \ \ nU=\sum_{\alpha=1}^rg(nU,N_{\alpha})N_{\alpha}$$
$$\bar{\nabla}_XU=\sum_{\alpha=1}^rX(g(U,N_{\alpha}))N_{\alpha}-\sum_{\alpha=1}^rg(U,N_{\alpha})A_{N_{\alpha}}X+
\sum_{1\leq \alpha,\beta\leq r}g(U,N_{\alpha})\lambda_{\alpha\beta}(X)N_{\beta}$$
$$\bar{\nabla}_XnU=\sum_{\alpha=1}^rX(g(nU,N_{\alpha}))N_{\alpha}-\sum_{\alpha=1}^rg(nU,N_{\alpha})A_{N_{\alpha}}X+
\sum_{1\leq \alpha,\beta\leq r}g(nU,N_{\alpha})\lambda_{\alpha\beta}(X)N_{\beta}$$
$$J(\bar{\nabla}_XU)=\sum_{\alpha=1}^rX(g(U,N_{\alpha}))(tN_{\alpha}+nN_{\alpha})-\sum_{\alpha=1}^rg(U,N_{\alpha})(T(A_{N_{\alpha}}X)+N(A_{N_{\alpha}}X))+$$$$+
\sum_{1\leq \alpha,\beta\leq r}g(U,N_{\alpha})\lambda_{\alpha\beta}(X)(tN_{\beta}+nN_{\beta}).$$

It follows:
$$0=(\bar{\nabla}_XJ)U=\bar{\nabla}_XtU+\bar{\nabla}_XnU-J(\bar{\nabla}_XU)=$$$$=
      [\nabla_XtU-\sum_{\alpha=1}^rg(nU,N_{\alpha})A_{N_{\alpha}}X-     \sum_{\alpha=1}^rX(g(U,N_{\alpha}))tN_{\alpha}+$$$$+\sum_{\alpha=1}^rg(U,N_{\alpha})T(A_{N_{\alpha}}X)-
\sum_{1\leq \alpha,\beta\leq r}g(U,N_{\alpha})\lambda_{\alpha\beta}(X)tN_{\beta}]+$$$$+[\sum_{\alpha=1}^rh_{\alpha}(X,tU)N_{\alpha}+
      \sum_{\alpha=1}^rX(g(nU,N_{\alpha}))N_{\alpha}+\sum_{1\leq \alpha,\beta\leq r}g(nU,N_{\alpha})\lambda_{\alpha\beta}(X)N_{\beta}-$$$$-
      \sum_{\alpha=1}^rX(g(U,N_{\alpha}))nN_{\alpha}+\sum_{\alpha=1}^rg(U,N_{\alpha})N(A_{N_{\alpha}}X)-
\sum_{1\leq \alpha,\beta\leq r}g(U,N_{\alpha})\lambda_{\alpha\beta}(X)nN_{\beta}].$$
    \end{enumerate}
\end{proof}

\begin{remark}
From the previous computations, we obtain for any $X$, $Y\in \Gamma(TM)$:
\begin{equation}\label{e4}
(\bar{\nabla}_XJ)Y=[\nabla_XTY-\sum_{\alpha=1}^rg(NY,N_{\alpha})A_{N_{\alpha}}X-T(\nabla_XY)-\sum_{\alpha=1}^rh_{\alpha}(X,Y)tN_{\alpha}]+
\end{equation}
$$+[\sum_{\alpha=1}^rh_{\alpha}(X,TY)N_{\alpha}+\sum_{\alpha=1}^rX(g(NY,N_{\alpha}))N_{\alpha}
+\sum_{1\leq \alpha,\beta\leq r}g(NY,N_{\alpha})\lambda_{\alpha\beta}(X)N_{\beta}-$$$$-N(\nabla_XY)-\sum_{\alpha=1}^rh_{\alpha}(X,Y)nN_{\alpha}].
$$
\end{remark}

Remark that, for $X\in T_xM$, the vector fields $JX$ and $JN_{\alpha}$ decompose into the tangential and the normal components:
\begin{equation}\label{e2}
JX=TX+\sum_{\alpha=1}^r\eta_{\alpha}(X)N_{\alpha},
\end{equation}
\begin{equation}\label{e3}
JN_{\alpha}=\xi_{\alpha}+\sum_{\beta=1}^ra_{\alpha\beta}N_{\beta},
\end{equation}
where $T$ is a tensor field of $(1,1)$-type on $M$ (which associates to tangent vector field $X$ on $M$ the tangential part of $JX$), $\xi_{\alpha}$ are vector fields and $\eta_{\alpha}$ are $1$-forms on $M$ ($1\leq \alpha \leq r$) and $(a_{\alpha\beta})_{1\leq \alpha,\beta\leq r}$ is an $r\times r$ matrix of smooth real functions on $M$, whose properties are stated in the next Proposition:

\begin{proposition}\cite{c}\label{pc}
If $M$ is an isometrically immersed $n$-dimensional submanifold of codimension $r$ of the $(n+r)$-dimensional metallic Riemannian manifold $(\bar{M},J, g)$, then the induced structure $\Sigma:=(T,g,\eta_{\alpha},\xi_{\alpha},(a_{\alpha\beta}))_{1\leq \alpha,\beta\leq r}$ on $M$ satisfies:
\begin{enumerate}
  \item $T^2=pT+qI_{\Gamma(TM)}-\sum_{\alpha=1}^r\eta_{\alpha}\otimes \xi_{\alpha}$;
  \item $\eta_{\alpha}\circ T=p\eta_{\alpha}-\sum_{\beta=1}^ra_{\alpha\beta}\eta_{\beta}$;
  \item $a_{\alpha\beta}=a_{\beta\alpha}$;
  \item $\eta_{\beta}(\xi_{\alpha})=q\delta_{\alpha\beta}+pa_{\alpha\beta}-\sum_{\gamma=1}^ra_{\alpha\gamma}a_{\gamma\beta}$;
  \item $T\xi_{\alpha}=p\xi_{\alpha}-\sum_{\beta=1}^ra_{\alpha\beta}\xi_{\beta}$;
  \item $\eta_{\alpha}(X)=i_{\xi_{\alpha}}g(X)$;
  \item $g(TX,Y)=g(X,TY)$;
  \item $g(TX,TY)=pg(X,TY)+qg(X,Y)-\sum_{\alpha=1}^r\eta_{\alpha}(X)\eta_{\alpha}(Y)$,
  \end{enumerate}
for any $X$, $Y\in \Gamma(TM)$, $1\leq \alpha,\beta\leq r$, where $\delta_{\alpha\beta}$ is the Kronecker delta.
\end{proposition}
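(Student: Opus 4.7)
The strategy is to feed the two decompositions (10)--(11) into the structural identity $J^2=pJ+qI$ and into the compatibility condition $g(JX,Y)=g(X,JY)$, then separate tangential and normal parts. All eight identities fall out as algebraic consequences; the proof is entirely pointwise, with no covariant differentiation needed, which is what makes it routine but bookkeeping-heavy.

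First I would harvest the identities that come directly from $g$-compatibility of $J$. For property (6), $\eta_{\alpha}(X)=g(JX,N_{\alpha})=g(X,JN_{\alpha})=g(X,\xi_{\alpha})$, using that the $N_{\beta}$-components of $JN_{\alpha}$ are normal and hence $g$-orthogonal to $X$. Property (3) is obtained the same way by pairing $JN_{\alpha}$ with $N_{\beta}$: $a_{\alpha\beta}=g(JN_{\alpha},N_{\beta})=g(N_{\alpha},JN_{\beta})=a_{\beta\alpha}$. For (7), write $g(TX,Y)=g(JX,Y)-\sum_{\alpha}\eta_{\alpha}(X)g(N_{\alpha},Y)=g(JX,Y)$ since $Y$ is tangent; symmetry follows by the analogous computation of $g(X,TY)$.

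Next I would apply $J^2=pJ+qI$ to a tangent vector $X$. Expanding $J(JX)$ via (10) and then (10)--(11) gives $T^{2}X+\sum_{\alpha}\eta_{\alpha}(X)\xi_{\alpha}$ in the tangential part and $\sum_{\alpha}\eta_{\alpha}(TX)N_{\alpha}+\sum_{\alpha,\beta}\eta_{\alpha}(X)a_{\alpha\beta}N_{\beta}$ in the normal part; comparing with $pTX+qX+p\sum_{\alpha}\eta_{\alpha}(X)N_{\alpha}$ and using the symmetry $a_{\alpha\beta}=a_{\beta\alpha}$ from (3) yields (1) and (2) respectively. Doing the same with $N_{\alpha}$ in place of $X$: expand $J(JN_{\alpha})$ and match with $p\xi_{\alpha}+(pa_{\alpha\beta}+q\delta_{\alpha\beta})N_{\beta}$; the tangential part gives (5) and the normal part gives (4).

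Finally, for (8), I would compute $g(JX,JY)$ two ways. On the one hand $g(JX,JY)=g(X,J^{2}Y)=pg(X,JY)+qg(X,Y)=pg(X,TY)+qg(X,Y)$, again using $g(X,N_{\alpha})=0$. On the other hand, substituting (10) on both sides gives $g(JX,JY)=g(TX,TY)+\sum_{\alpha}\eta_{\alpha}(X)\eta_{\alpha}(Y)$, since the cross-terms vanish and $\{N_{\alpha}\}$ is orthonormal. Equating the two expressions yields (8). There is no real obstacle here; the only thing to watch is the order in which (3) is established, since it is needed to pass between $\sum_{\beta}a_{\alpha\beta}\eta_{\beta}$ and $\sum_{\beta}a_{\beta\alpha}\eta_{\beta}$ when deriving (2) and (4).
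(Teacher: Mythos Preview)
Your proof is correct and follows the natural route: feed the decompositions (\ref{e2})--(\ref{e3}) into $J^{2}=pJ+qI$ and into $g(J\cdot,\cdot)=g(\cdot,J\cdot)$, then split into tangential and normal components. Note that the paper itself does not give a proof of this proposition; it is quoted from \cite{c}, so there is no in-paper argument to compare against, but your derivation is exactly the standard one used for such induced structures.
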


Notice that in our notations, $\eta_{\alpha}(X)=g(NX,N_{\alpha})$, for $X\in \Gamma(TM)$ and $\xi_{\alpha}=tN_{\alpha}$ and $a_{\alpha\beta}=g(nN_{\alpha},N_{\beta})$.

\begin{proposition}\label{t}
Let $M$ be an isometrically immersed $n$-dimensional submanifold of codimension $r$ of the $(n+r)$-dimensional locally metallic Riemannian manifold $(\bar{M},J, g)$ and $\Sigma:=(T,g,\eta_{\alpha},\xi_{\alpha},(a_{\alpha\beta}))_{1\leq \alpha,\beta\leq r}$ is the induced structure on $M$. Then for any $X$, $Y\in \Gamma(TM)$ and $1\leq \alpha,\beta\leq r$:
\begin{enumerate}
  \item $(\nabla_XT)Y=\sum_{\alpha=1}^r[\eta_{\alpha}(Y)A_{N_{\alpha}}X+h_{\alpha}(X,Y)\xi_{\alpha}]$;
  \item $(\nabla_X\eta_{\alpha})Y=-h_{\alpha}(X,TY)+\sum_{\beta=1}^r[a_{\alpha\beta}h_{\beta}(X,Y)+\eta_{\beta}(Y)\lambda_{\alpha\beta}(X)]$;
  \item
     $\nabla_X\xi_{\alpha}=-T(A_{N_{\alpha}}X)+\sum_{\beta=1}^r[a_{\alpha\beta}A_{N_{\beta}}X+\lambda_{\alpha\beta}(X)\xi_{\beta}]$;
  \item $X(a_{\alpha\beta})=-[h_{\alpha}(X,\xi_{\beta})+h_{\beta}(X,\xi_{\alpha})]-
\sum_{\gamma=1}^r[a_{\alpha\gamma}\lambda_{\gamma\beta}(X)+a_{\beta\gamma}\lambda_{\gamma\alpha}(X)]$.
\end{enumerate}
\end{proposition}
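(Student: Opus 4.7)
My plan is to deduce all four identities as componentwise consequences of Proposition \ref{p3}, after rewriting every occurrence of $g(NY,N_\alpha)$, $tN_\alpha$, and $g(nN_\alpha,N_\beta)$ as $\eta_\alpha(Y)$, $\xi_\alpha$, and $a_{\alpha\beta}$ (the identifications recorded just before Proposition \ref{t}), and using the orthonormality $g(N_\alpha,N_\beta)=\delta_{\alpha\beta}$ of the chosen normal frame.

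Item (1) is a verbatim rewriting of Proposition \ref{p3}.1.(a). For item (2) I would pair the normal-valued equation of Proposition \ref{p3}.1.(b) with $N_\alpha$; using $g(\nabla^\perp_X N_\beta,N_\alpha)=\lambda_{\beta\alpha}(X)$ and $g(nN_\beta,N_\alpha)=a_{\beta\alpha}$, the resulting scalar equation rearranges to
\[
(\nabla_X\eta_\alpha)(Y) \;=\; -h_\alpha(X,TY) + \sum_\beta a_{\beta\alpha} h_\beta(X,Y) - \sum_\beta \eta_\beta(Y)\lambda_{\beta\alpha}(X),
\]
and the symmetries $a_{\beta\alpha}=a_{\alpha\beta}$, $\lambda_{\beta\alpha}=-\lambda_{\alpha\beta}$ then produce the claimed form. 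Item (3) follows from specializing Proposition \ref{p3}.2.(a) to $U=N_\alpha$: because $g(N_\alpha,N_\beta)=\delta_{\alpha\beta}$ is constant, every $X(g(U,N_\beta))$ vanishes, the sum $\sum_\gamma g(N_\alpha,N_\gamma)\lambda_{\gamma\beta}(X)$ collapses to $\lambda_{\alpha\beta}(X)$, and $g(nN_\alpha,N_\beta)=a_{\alpha\beta}$ together with $tN_\alpha=\xi_\alpha$ identifies the remaining terms with those in the statement.

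For item (4) I would argue directly from $\bar{\nabla}J=0$ rather than via Proposition \ref{p3}.2.(b): since $a_{\alpha\beta}=g(JN_\alpha,N_\beta)$ and $J$ is $g$-symmetric,
\[
X(a_{\alpha\beta}) \;=\; g(\bar{\nabla}_X N_\alpha,\, JN_\beta) + g(JN_\alpha,\, \bar{\nabla}_X N_\beta).
\]
Substituting the Weingarten equation \eqref{e5} together with $JN_\gamma=\xi_\gamma+\sum_\delta a_{\gamma\delta}N_\delta$ into each summand and discarding the vanishing tangent-normal pairings, the two terms evaluate to $-h_\alpha(X,\xi_\beta)+\sum_\gamma a_{\beta\gamma}\lambda_{\alpha\gamma}(X)$ and $-h_\beta(X,\xi_\alpha)+\sum_\gamma a_{\alpha\gamma}\lambda_{\beta\gamma}(X)$ respectively, whereupon a final application of $\lambda_{\alpha\gamma}=-\lambda_{\gamma\alpha}$ delivers the stated form. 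No step is subtle; the only real pitfall is index bookkeeping — keeping the skew-symmetry of $\lambda$ distinct from the symmetry of $a$, and remembering that orthonormality of $\{N_\alpha\}$ trivialises $X(g(N_\alpha,N_\beta))$.
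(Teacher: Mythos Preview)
Your proposal is correct and follows essentially the same strategy as the paper: both arguments amount to reading off the tangential and normal components of $(\bar{\nabla}_XJ)(\cdot)=0$ in the $\Sigma$-notation, and for item~(4) both you and the authors compute $X(a_{\alpha\beta})=X(g(JN_\alpha,N_\beta))$ directly via the Weingarten equation. The only cosmetic difference is in item~(3): the paper obtains it from item~(2) by the duality $(\nabla_X\eta_\alpha)Y=g(\nabla_X\xi_\alpha,Y)$, whereas you specialise Proposition~\ref{p3}.2.(a) to $U=N_\alpha$; both routes are equally short and yield the same formula.
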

\begin{proof}
$$0=(\bar{\nabla}_{X}J)Y:=\bar{\nabla}_{X}JY-J(\bar{\nabla}_{X}Y)=$$
$$=\bar{\nabla}_{X}J(TY)+\sum_{\alpha=1}^r\eta_{\alpha}(Y)\bar{\nabla}_{X}N_{\alpha}+\sum_{\alpha=1}^rX(\eta_{\alpha}(Y))N_{\alpha}
-J(\nabla_XY)-\sum_{\alpha=1}^rh_{\alpha}(X,Y)JN_{\alpha}=$$
$$=(\nabla_{X}T)Y+\sum_{\alpha=1}^rh_{\alpha}(X,TY)N_{\alpha}-\sum_{\alpha=1}^r\eta_{\alpha}(Y)A_{N_{\alpha}}X+
\sum_{\alpha=1}^rX(\eta_{\alpha}(Y))N_{\alpha}-$$$$-\sum_{\alpha=1}^r\eta_{\alpha}(\nabla_XY)N_{\alpha}-\sum_{\alpha=1}^rh_{\alpha}(X,Y)\xi_{\alpha}
+\sum_{1\leq \alpha,\beta \leq r}\lambda_{\alpha\beta}(X)\eta_{\alpha}(Y)N_{\beta}-\sum_{1\leq \alpha,\beta \leq r}a_{\alpha\beta}h_{\alpha}(X,Y)N_{\beta}=$$
$$=(\nabla_{X}T)Y-\sum_{\alpha=1}^r\eta_{\alpha}(Y)A_{N_{\alpha}}X-\sum_{\alpha=1}^rh_{\alpha}(X,Y)\xi_{\alpha}
+$$$$+\sum_{\alpha=1}^r[(\nabla_X\eta_{\alpha})Y+h_{\alpha}(X,TY)+\sum_{\beta=1}^r\lambda_{\alpha\beta}(X)\eta_{\beta}(Y)-
\sum_{\beta=1}^ra_{\alpha\beta}h_{\beta}(X,Y)]N_{\alpha},$$
from where we deduce that the tangential and the normal components should both vanish.

Also:
$$(\nabla_X\eta_{\alpha})Y:=X(\eta_{\alpha}(Y))-\eta_{\alpha}(\nabla_XY)=X(g(\xi_{\alpha},Y))-g(\xi_{\alpha}, \nabla_XY)=g(\nabla_X\xi_{\alpha},Y)$$
and from the second relation we get:
$$\nabla_X\xi_{\alpha}=-T(A_{N_{\alpha}}X)+\sum_{\beta=1}^r[a_{\alpha\beta}A_{N_{\beta}}X-g(\nabla^{\perp}_X{N_{\beta}},N_{\alpha})\xi_{\beta}].$$

For the last relation we have:
$$X(a_{\alpha\beta})=X(g(JN_{\alpha},N_{\beta}))=g(\bar{\nabla}_XJN_{\alpha},N_{\beta})+g(JN_{\alpha},\bar{\nabla}_XN_{\beta})=$$
$$=g((\bar{\nabla}_XJ)N_{\alpha},N_{\beta})+g(\bar{\nabla}_XN_{\alpha},JN_{\beta})+g(JN_{\alpha},\bar{\nabla}_XN_{\beta})$$
and replacing $\bar{\nabla}_XN_{\alpha}$ and $\bar{\nabla}_XN_{\beta}$ from (\ref{e5}) we get the required relation.
\end{proof}

From Proposition \ref{p3}, if $(\bar{M},J, g)$ is a locally Riemannian metallic manifold, then we can express the Nijenhuis tensor field of $T$ as follows.
\begin{proposition}\cite{hrbl} \label{p7}
If $M$ is an isometrically immersed $n$-dimensional submanifold of codimension $r$ of the $(n+r)$-dimensional locally metallic Riemannian manifold $(\bar{M},J, g)$ and $\Sigma:=(T,g,\eta_{\alpha},\xi_{\alpha},(a_{\alpha\beta}))_{1\leq \alpha,\beta\leq r}$ is the induced structure on $M$, then:
\begin{equation}
N_T(X,Y)=-\sum_{\alpha=1}^rg((TA_{N_{\alpha}}-A_{N_{\alpha}}T)X,Y)\xi_{\alpha}-\sum_{\alpha=1}^r\eta_{\alpha}(Y)(TA_{N_{\alpha}}-A_{N_{\alpha}}T)X+
\end{equation}$$+
\sum_{\alpha=1}^r\eta_{\alpha}(X)(TA_{N_{\alpha}}-A_{N_{\alpha}}T)Y,$$
for any $X$, $Y\in \Gamma(TM)$.
\end{proposition}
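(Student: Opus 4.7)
The plan is to reduce the Nijenhuis tensor $N_T(X,Y) = [TX,TY] - T[TX,Y] - T[X,TY] + T^2[X,Y]$ to an expression purely in terms of $(\nabla_X T)Y$, and then substitute formula (1) of Proposition \ref{t}. Since $\nabla$ is torsion-free, every Lie bracket rewrites as $[X,Y] = \nabla_X Y - \nabla_Y X$, and the defining identity $(\nabla_X T)Y = \nabla_X(TY) - T(\nabla_X Y)$ allows us to trade each $\nabla_{(\cdot)}T(\cdot)$ for a covariant derivative of $T$.

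Carrying this out, I would expand each of the four Lie brackets, split $\nabla_{TX}(TY) = (\nabla_{TX} T)Y + T(\nabla_{TX} Y)$ (and similarly for the other three), and collect terms. The terms involving $T(\nabla_{(\cdot)} Y)$ and $T^2(\nabla_{(\cdot)}(\cdot))$ cancel in pairs, leaving the clean identity
\begin{equation*}
N_T(X,Y) = (\nabla_{TX} T)Y - (\nabla_{TY} T)X + T\bigl((\nabla_Y T)X\bigr) - T\bigl((\nabla_X T)Y\bigr).
\end{equation*}
This is the first main step; it is essentially the standard rewriting of Nijenhuis in terms of $\nabla T$ and does not use the ambient structure yet.

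The second step is to substitute the formula $(\nabla_X T)Y = \sum_\alpha [\eta_\alpha(Y) A_{N_\alpha} X + h_\alpha(X,Y)\xi_\alpha]$ from Proposition \ref{t}(1) into each of the four pieces. This produces eight terms. Two of them, $\sum_\alpha h_\alpha(X,Y) T\xi_\alpha$ with opposite signs coming from $T(\nabla_Y T)X$ and $T(\nabla_X T)Y$, cancel immediately by symmetry of $h_\alpha$. The remaining six terms group naturally into three pairs according to whether they involve $\eta_\alpha(X)$, $\eta_\alpha(Y)$, or the coefficients $h_\alpha(TX,Y), h_\alpha(TY,X)$.

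The third step is the symmetry manipulation that rewrites the pair carrying $h_\alpha$ into a commutator expression. Using that $h_\alpha$ is symmetric, that $A_{N_\alpha}$ is $g$-symmetric (since $h_\alpha(X,Y)=g(A_{N_\alpha}X,Y)$), and that $T$ is $g$-symmetric by Proposition \ref{pc}(7), one computes $h_\alpha(TX,Y) - h_\alpha(X,TY) = g(A_{N_\alpha} TX, Y) - g(TA_{N_\alpha} X, Y) = -g((TA_{N_\alpha} - A_{N_\alpha}T)X, Y)$, which yields the first summand of the claimed formula. The remaining two pairs, $\eta_\alpha(Y)[A_{N_\alpha}(TX) - T(A_{N_\alpha} X)]$ and $\eta_\alpha(X)[T(A_{N_\alpha} Y) - A_{N_\alpha}(TY)]$, already are the commutators $-(TA_{N_\alpha}-A_{N_\alpha}T)X$ and $+(TA_{N_\alpha}-A_{N_\alpha}T)Y$ contracted with $\eta_\alpha$, giving the second and third summands. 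The main obstacle is purely organizational: keeping straight the eight terms and making sure the cancellations and symmetry rewrites are applied with correct signs; no new geometric input beyond $\bar\nabla J = 0$ (already encoded in Proposition \ref{t}(1)) is needed.
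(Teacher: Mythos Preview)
Your proof is correct. Note that the paper does not actually give its own proof of this proposition---it is quoted from \cite{hrbl}---but the sentence preceding the statement points to Proposition~\ref{p3} as the source, and since Proposition~\ref{p3}(1)(a) is (via the identifications $\eta_{\alpha}(Y)=g(NY,N_{\alpha})$ and $\xi_{\alpha}=tN_{\alpha}$) exactly the formula from Proposition~\ref{t}(1) that you invoke, your argument is precisely the intended one.
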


Following \cite{IMihai}, we can compute the components
$N^{(1)}$, $N^{(2)}$, $N^{(3)}$ and $N^{(4)}$ of the Nijenhuis
tensor field of $T$ for the induced structure $\Sigma:=(T,g,\eta_{\alpha},\xi_{\alpha},(a_{\alpha\beta}))_{1\leq \alpha,\beta\leq r}$ on the $n$-dimensional submanifold of codimension $r$ of the $(n+r)$-dimensional metallic Riemannian manifold $(\bar{M},J, g)$:
\begin{enumerate}
  \item $N^{(1)}(X,Y)=N_{T}(X,Y)-2\sum_{\alpha=1}^{r}d \eta_{\alpha}(X,Y)\xi_{\alpha}$;
  \item $N_{\alpha}^{(2)}(X,Y)=(\mathcal{L}_{TX}\eta_{\alpha})Y-(\mathcal{L}_{TY}\eta_{\alpha})X$;
  \item $N_{\alpha}^{(3)}(X)=(\mathcal{L}_{\xi_{\alpha}}T)X$;
  \item $N_{\alpha\beta}^{(4)}(X)=(\mathcal{L}_{\xi_{\alpha}}\eta_{\beta})X$,
\end{enumerate}
for any $X$, $Y \in \Gamma(TM)$ and $1\leq \alpha, \beta \leq r$, where $N_{T}$ is the Nijenhuis tensor field of $T$ and
$\mathcal{L}_{X}$ denoted the Lie derivative with respect to $X$.

\subsection{$\Sigma$-structures induced on submanifolds}

Let $\bar{M}$ be an isometrically immersed $(n+r)$-dimensional submanifold of codimension $1$ of the $(n+r+1)$-dimensional metallic Riemannian manifold $(\bar{\bar{M}},J, g)$, $M$ an isometrically immersed $n$-dimensional submanifold of codimension $r$ of $\bar{M}$ and denote also by $g$ the induced Riemannian metrics on $\bar{M}$ and $M$. Let $N$ be a unit vector field on $\bar{\bar{M}}$ normal to $\bar{M}$ and $\{N_1,\dots,N_r\}$ be an orthonormal basis for the normal space to $M$ in $\bar{M}$. Also, $\{N,N_1,\dots,N_r\}$ is an orthonormal basis for the normal space to $M$ in $\bar{\bar{M}}$. Thus, $M$ is an $n$-dimensional submanifold of codimension $r+1$ of $\bar{\bar{M}}$ and we have the following immersions between the Riemannian manifolds:
$$(M, g)\hookrightarrow (\bar{M}, g) \hookrightarrow (\bar{\bar{M}}, g).$$

Then for any $X\in \Gamma(T\bar{M})$ we have:
\begin{equation}
JX=\bar{T}X+\bar{\eta}(X)N,
\end{equation}
\begin{equation}
JN=\bar{\xi}+\bar{a}N,
\end{equation}
where $\bar{T}$ is a tensor field of $(1,1)$-type on $\bar{M}$ (which associates to tangent vector field $X$ on $\bar{M}$ the tangential part of $JX$), $\bar{\xi}$ is a vector field and $\bar{\eta}$ is a $1$-form on $\bar{M}$ and $\bar{a}$ is a smooth real function on $\bar{M}$, and for any $X\in \Gamma(TM)$ we have:
\begin{equation}
JX=TX+\sum_{\alpha=1}^r\eta_{\alpha}(X)N_{\alpha}+\eta(X)N,
\end{equation}
\begin{equation}
JN_{\alpha}=\xi_{\alpha}+\sum_{\beta=1}^ra_{\alpha\beta}N_{\beta}+aN,
\end{equation}
\begin{equation}
JN=\xi+\sum_{\alpha=1}^rb_{\alpha}N_{\alpha}+aN,
\end{equation}
where $T$ is a tensor field of $(1,1)$-type on $M$ (which associates to tangent vector field $X$ on $M$ the tangential part of $JX$), $\xi_{\alpha}$, $\xi$ are vector fields and $\eta_{\alpha}$, $\eta$ are $1$-forms on $M$ ($1\leq \alpha \leq r$), $(a_{\alpha\beta})_{1\leq \alpha,\beta\leq r}$ is an $r\times r$ matrix of smooth real functions on $M$ and $a=\bar{a}|_{M}$.

Let $\mathcal{A}:=\begin{pmatrix}
                    (a_{\alpha\beta})_{1\leq \alpha,\beta\leq r} & ^tA \\
                    A & a \\
                  \end{pmatrix}
$, where $A:=(g(JN_1,N),\dots,g(JN_r,N))$.

\begin{lemma}
The structure $\bar{\Sigma}:=(\bar{T},g,\bar{\eta},\bar{\xi},\bar{a})$ induced on the submanifold $(\bar{M},g)$ of codimension $1$ of the $(n+r+1)$-dimensional metallic Riemannian manifold $(\bar{\bar{M}},J, g)$ also induces on the submanifold $(M,g)$ of codimension $r$ of $\bar{M}$ a structure $\Sigma:=(T,g,\eta_{\alpha},\eta,\xi_{\alpha},\xi,\mathcal{A})_{1\leq \alpha\leq r}$ which has the following properties:
\begin{enumerate}
  \item $T^2=pT+qI_{\Gamma(TM)}-\sum_{\alpha=1}^r\eta_{\alpha}\otimes \xi_{\alpha}-\eta\otimes \xi$;
  \item $\eta_{\alpha}\circ T=p\eta_{\alpha}-\sum_{\beta=1}^ra_{\alpha\beta}\eta_{\beta}-b_{\alpha}\eta$;
  \item $\eta\circ T=(p-a)\eta$;
  \item $a_{\alpha\beta}=a_{\beta\alpha}$;
  \item $\eta_{\beta}(\xi_{\alpha})=q\delta_{\alpha\beta}+pa_{\alpha\beta}+b_{\beta}-\sum_{\gamma=1}^r(a_{\alpha\gamma}+
      b_{\gamma})(a_{\gamma\beta}+b_{\gamma})$;
  \item $\eta_{\alpha}(\xi)=(p-a)b_{\alpha}-\sum_{\beta=1}^ra_{\alpha\beta}b_{\beta}$;
  \item $\eta(\xi)=q+pa-a^2$;
  \item $T\xi_{\alpha}=p\xi_{\alpha}-\sum_{\beta=1}^ra_{\alpha\beta}\xi_{\beta}+(1-b_{\alpha})\xi$;
  \item $T\xi=(p-a)\xi-\sum_{\beta=1}^rb_{\beta}\xi_{\beta}$;
  \item $\eta_{\alpha}(X)=i_{\xi_{\alpha}}g(X)$;
  \item $\eta(X)=i_{\xi}g(X)$;
  \item $g(TX,Y)=g(X,TY)$;
  \item $g(TX,TY)=pg(X,TY)+qg(X,Y)-\sum_{\alpha=1}^r\eta_{\alpha}(X)\eta_{\alpha}(Y)-\eta(X)\eta(Y)$,
  \end{enumerate}
for any $X$, $Y\in \Gamma(TM)$, $1\leq \alpha,\beta\leq r$, where $\delta_{\alpha\beta}$ is the Kronecker delta.

Also:
\begin{enumerate}
  \item $\bar{T}^2=p\bar{T}+qI_{\Gamma(T\bar{M})}-\bar{\eta}\otimes \bar{\xi}$;
  \item $\bar{\eta}\circ \bar{T}=(p-\bar{a})\bar{\eta}$;
  \item $\bar{\eta}(\bar{\xi})=q+p\bar{a}-\bar{a}^2$;
  \item $\bar{T}\bar{\xi}=(p-\bar{a})\bar{\xi}$;
  \item $\bar{\eta}(X)=i_{\bar{\xi}}g(X)$;
  \item $g(\bar{T}X,Y)=g(X,\bar{T}Y)$;
  \item $g(\bar{T}X,\bar{T}Y)=pg(X,\bar{T}Y)+qg(X,Y)-\bar{\eta}(X)\bar{\eta}(Y)$,
  \end{enumerate}
for any $X$, $Y\in \Gamma(T\bar{M})$.
\end{lemma}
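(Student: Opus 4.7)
The plan is to derive both lists by applying the already-established codimension-$r$ result of Proposition \ref{pc} at two different levels of the tower $M \hookrightarrow \bar{M} \hookrightarrow \bar{\bar{M}}$, and then to glue the two pieces together through the orthogonal decomposition $\bar{T}X = TX + \sum_\alpha \eta_\alpha(X) N_\alpha$ valid for every $X \in \Gamma(TM)$, together with the analogous decompositions of $\bar{\xi}$ and $\bar{T}N_\alpha$ with respect to $M \subset \bar{M}$. Only the purely algebraic relation $J^2 = pJ + qI$ and the compatibility $g(JX,Y)=g(X,JY)$ are needed; no integrability or parallelism hypothesis is involved.

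The second list (the seven identities for $\bar{\Sigma} = (\bar{T}, g, \bar{\eta}, \bar{\xi}, \bar{a})$) is immediate: it is exactly Proposition \ref{pc} applied to the codimension-$1$ isometric immersion $\bar{M} \hookrightarrow \bar{\bar{M}}$, with the sums over $\alpha$ collapsing to a single term.

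For the first list, I would view $M$ as a codimension-$(r+1)$ submanifold of $\bar{\bar{M}}$ carrying the orthonormal normal frame $\{N_1,\dots,N_r,N\}$, so that Proposition \ref{pc} is already almost there; the only task is to bookkeep the extra normal direction $N$ as a separate contribution. Concretely, I would substitute $JX = TX + \sum_\alpha \eta_\alpha(X) N_\alpha + \eta(X) N$ into $J^2 X = pJX + qX$ and collect the tangential part (giving (1)), the $N_\beta$-component (giving (2) after using $a_{\alpha\beta}=a_{\beta\alpha}$), and the $N$-component (giving (3), once one identifies $\sum_\alpha b_\alpha \eta_\alpha$ in the expansion). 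The identities (4)--(9) come from the same procedure started from $JN_\alpha$ and $JN$, after recording $a_{\alpha\beta}=g(JN_\alpha,N_\beta)$, $b_\alpha=g(JN_\alpha,N)=g(N_\alpha,JN)$ and $a=g(JN,N)$; items (10)--(11) are nothing but the defining relations of $\xi_\alpha$ and $\xi$ read off from $g(JX,N_\alpha)=g(X,JN_\alpha)$ and $g(JX,N)=g(X,JN)$; item (12) is the compatibility of $J$ restricted to $TM$; and item (13) is obtained by expanding $g(JX,JY)=g(X,J^2Y)=pg(X,JY)+qg(X,Y)$ with the two decompositions of $JX$ and $JY$ and noticing that the cross-terms collapse into $\sum_\alpha \eta_\alpha(X)\eta_\alpha(Y)+\eta(X)\eta(Y)$.

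A slightly slicker presentation --- which I would prefer in the write-up --- short-circuits several of these computations by using the intermediate $\bar{\Sigma}$-identities already proved in the second list. For instance, applying $\bar{T}^2 = p\bar{T} + qI - \bar{\eta}\otimes\bar{\xi}$ to $X\in\Gamma(TM)$ and splitting $\bar{T}X$ and $\bar{\xi}$ into their tangential and $N_\alpha$-parts relative to $M\subset\bar{M}$ reproduces (1) and (2) at once; similarly $\bar{T}\bar{\xi}=(p-\bar{a})\bar{\xi}$ applied to $\bar{\xi}=\xi+\sum_\beta b_\beta N_\beta$ recovers (8)--(9), and $\bar{\eta}(\bar{\xi})=q+p\bar{a}-\bar{a}^2$ recovers (7). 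The main obstacle is purely combinatorial: with two different families of normal directions one must keep the sums $\sum_\alpha \eta_\alpha\otimes \xi_\alpha$, the single term $\eta\otimes\xi$ and the mixed coefficients $b_\alpha$ rigorously separated, and it is here that the block form of the matrix $\mathcal{A}$ is genuinely useful as an organising device to verify that the coefficients match in each of the thirteen items. No conceptually new idea beyond the metallic relation and the successive orthogonal splittings is required.
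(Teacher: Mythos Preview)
The paper states this lemma without proof, so there is nothing to compare against at the level of argument. Your plan is exactly the right one and is implicitly what the authors have in mind: the second list is Proposition~\ref{pc} applied verbatim to the codimension-$1$ immersion $\bar{M}\hookrightarrow\bar{\bar{M}}$, and the first list is Proposition~\ref{pc} applied to the codimension-$(r+1)$ immersion $M\hookrightarrow\bar{\bar{M}}$ with the orthonormal normal frame $\{N_1,\dots,N_r,N\}$, followed by separating the index $r+1$ from the remaining indices $1,\dots,r$ and renaming $\eta_{r+1}=\eta$, $\xi_{r+1}=\xi$, $\mathcal{A}_{\alpha,r+1}=\mathcal{A}_{r+1,\alpha}=b_\alpha$, $\mathcal{A}_{r+1,r+1}=a$. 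Your ``slicker'' variant via the intermediate $\bar{\Sigma}$-identities is also legitimate and amounts to the same thing.

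One caution, though: when you actually carry out the bookkeeping you will \emph{not} reproduce several of the formulas exactly as printed. For instance, Proposition~\ref{pc} gives $\eta\circ T=(p-a)\eta-\sum_{\beta}b_{\beta}\eta_{\beta}$ rather than item~(3) as stated; it gives $\eta(\xi)=q+pa-a^{2}-\sum_{\gamma}b_{\gamma}^{2}$ rather than item~(7); it gives $T\xi_{\alpha}=p\xi_{\alpha}-\sum_{\beta}a_{\alpha\beta}\xi_{\beta}-b_{\alpha}\xi$ rather than item~(8); and item~(5) should read $\eta_{\beta}(\xi_{\alpha})=q\delta_{\alpha\beta}+pa_{\alpha\beta}-\sum_{\gamma}a_{\alpha\gamma}a_{\gamma\beta}-b_{\alpha}b_{\beta}$. (Relatedly, the displayed decomposition $JN_{\alpha}=\xi_{\alpha}+\sum_{\beta}a_{\alpha\beta}N_{\beta}+aN$ just before the lemma should carry $b_{\alpha}$, not $a$, in the last term, since $g(JN_{\alpha},N)=g(N_{\alpha},JN)=b_{\alpha}$.) These are typographical slips in the statement, not a defect in your method; your approach is the correct one and in fact detects them.
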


From the above considerations, we obtain that if $M$ is an isometrically immersed $n$-dimensional submanifold of codimension $r$ of the $(n+r)$-dimensional Riemannian manifold $(\bar{M},\bar{g})$ which is an isometrically immersed $(n+r)$-di\-men\-sio\-nal submanifold of codimension $1$ of the $(n+r+1)$-dimensional metallic Riemannian manifold $(\bar{\bar{M}},J, \bar{\bar{g}})$, then the induced structure $\Sigma:=(T,g,\eta_{\alpha},\eta,\xi_{\alpha},\xi,\mathcal{A})_{1\leq \alpha\leq r}$ on $M$ by the metallic Riemannian structure $(J,\bar{\bar{g}})$ on $\bar{\bar{M}}$ is the same with the one induced on $M$ by the structure $\bar{\Sigma}:=(\bar{T},\bar{g},\bar{\eta},\bar{\xi},\bar{a})$ induced on $\bar{M}$ by the metallic Riemannian structure $(J,\bar{\bar{g}})$ on $\bar{\bar{M}}$.

\medskip

Let $M_r$ be an $n$-dimensional submanifold of codimension $r$ ($r\geq 2$) of the $(n+r)$-dimensional metallic Riemannian manifold $(\bar{M},J,g)$. We make the following notations: $\bar{M}:=M_0$, $g:=g_0$, $J:=T_0$, so we have the sequence of Riemannian immersions:
$$(M_r, g_r)\hookrightarrow (M_{r-1},g_{r-1}) \hookrightarrow \dots \hookrightarrow (M_1, g_1) \hookrightarrow (\bar{M}, g):=(M_0,g_0),$$
where $g_i$ is the induced metric on $M_i$ by the metric $g_{i-1}$ on $M_{i-1}$, $1\leq i\leq r$ and each of the manifolds $(M_i,g_i)$ is an isometrically immersed submanifold of codimension $1$ of the Riemannian manifold $(M_{i-1},g_{i-1})$, $1\leq i\leq r$. Let $1\leq \alpha_i,\beta_i\leq i$ for any $1\leq i\leq r$. In this setting, we obtain:
\begin{proposition}
The structure $\Sigma_r:=(T_r,g_r,\eta_{\alpha_r}^r,\xi_{\alpha_r}^r,\mathcal{A}_r)_{1\leq \alpha_r\leq r}$ on the $n$-dimensional submanifold $M_r$ of codimension $r$ ($r\geq 2$) of the metallic Riemannian manifold $(\bar{M},g,J)$ induced on $M_r$ by the metallic Riemannian structure $(J,g)$ is the same with the one induced on $M_r$ by the structure $\Sigma_i:=(T_i,g_i,\eta_{\alpha_i}^i,\xi_{\alpha_i}^i,\mathcal{A}_i)_{1\leq \alpha_i\leq i}$ (with $i<r$, $1\leq \alpha_i\leq i$) induced on $M_i$ by the metallic Riemannian structure $(J,g)$, where $T_r$ is the tangential component of $T_i$ on $M_r$, the vector fields $\xi_{\alpha_r}^r$ are the tangential components on $M_r$ of the vector fields $\xi_{\alpha_i}^i$ on $M_i$, the $1$-forms $\eta_{\alpha_r}^r$ are the restrictions on $M_r$ of the $1$-forms $\eta_{\alpha_i}^i$ on $M_i$ and the matrix $\mathcal{A}_r:=(a_{\alpha_r\beta_r})_{1\leq \alpha_r,\beta_r\leq r}$ is defined by $a_{\alpha_r\beta_r}=a_{\beta_r\alpha_r}=g_r(T_{r-1}N_{\alpha_r},N_{\beta_r})$.
\end{proposition}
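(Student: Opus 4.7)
The proof I propose is by induction on $r$, the length of the chain of codimension-one immersions, with the base case $r=1$ being trivially the definition of the induced structure. For the inductive step, I would exploit the preceding lemma, which treats exactly the two-step (codimension $1$ inside codimension $r$) situation.

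The plan is the following. Fix the chain
$$(M_r,g_r)\hookrightarrow (M_{r-1},g_{r-1})\hookrightarrow\dots\hookrightarrow (M_0,g_0)=(\bar M,g),$$
let $N_{\alpha_i}^i$ denote the unit normal of $M_i$ in $M_{i-1}$, extended by parallel transport along the subsequent immersions so as to give, for each $k>i$, a section of the normal bundle of $M_k$ in $M_{i-1}$. For $X\in\Gamma(TM_r)$, which also lies in $\Gamma(TM_i)$ for every $i<r$, I would compare two decompositions of $JX$ in $T\bar M$:
$$JX=T_rX+\sum_{\alpha_r=1}^r\eta_{\alpha_r}^r(X)\,N_{\alpha_r}^r,\qquad JX=T_iX+\sum_{\alpha_i=1}^i\eta_{\alpha_i}^i(X)\,N_{\alpha_i}^i.$$
The crux is to show that further splitting the right-hand side of the second decomposition into its $TM_r$-tangential and $TM_r$-normal components in $\bar M$ reproduces the first decomposition, and similarly for $JN_{\alpha_r}^r$. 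This will identify $T_r X$ with the $TM_r$-tangential part of $T_iX$, the vector fields $\xi_{\alpha_r}^r$ with the $TM_r$-tangential parts of $\xi_{\alpha_i}^i$, the $1$-forms $\eta_{\alpha_r}^r$ with the restrictions of $\eta_{\alpha_i}^i$, and the entries of $\mathcal A_r$ with $g_r(T_{r-1}N_{\alpha_r},N_{\beta_r})$.

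Concretely, I would first apply the preceding lemma to the one-step extension $M_r\hookrightarrow M_{r-1}\hookrightarrow\bar M$ obtained by collapsing the intermediate immersions between $\bar M$ and $M_{r-1}$ into a single immersion of codimension $r-1$; this yields that the structure induced on $M_r$ by the $\Sigma_{r-1}$-structure on $M_{r-1}$ (with $M_r$ sitting in it as codimension one) agrees with the structure induced on $M_r$ directly from $(\bar M,J,g)$. Then the inductive hypothesis, applied to the subchain $M_{r-1}\hookrightarrow\dots\hookrightarrow\bar M$, gives that $\Sigma_{r-1}$ is itself independent of the level $i<r-1$ at which we start inducing. Combining these two identifications gives the full statement, and in the course of the identification the expression $a_{\alpha_r\beta_r}=g_r(T_{r-1}N_{\alpha_r},N_{\beta_r})$ pops out of the definition of the matrix $\mathcal A$ in the preceding lemma.

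The main obstacle, as I see it, is purely notational rather than mathematical: one must keep rigorous bookkeeping of which normal frame is being used at each level, and verify that the tangential projections commute with the successive restrictions (i.e.\ that projecting $T_iX$ first onto $TM_{r-1}$ and then onto $TM_r$ gives the same answer as projecting $T_iX$ directly onto $TM_r$, which is immediate from orthogonality of the decomposition $T_{i-1}M=TM_i\oplus\mathbb R N_i^i$ at each step). Once this bookkeeping is in place, the algebraic identities follow mechanically from $J^2=pJ+qI$ together with $g$-compatibility, exactly as in Proposition \ref{pc} applied at each level.
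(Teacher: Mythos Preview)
The paper states this proposition without proof; it follows the preceding Lemma directly with the statement and then passes to the next subsection. So there is no paper argument to compare against, and your proposal must stand on its own.

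Your overall scheme is correct, and the observation you isolate in the last paragraph---that orthogonal projections through a nested chain $TM_r\subset TM_i\subset T\bar M$ are transitive, so that $(JX)^{TM_r}=\bigl((JX)^{TM_i}\bigr)^{TM_r}$ and likewise for the normal components---is in fact the entire content of the proposition. Once that is said, the identifications of $T_r$, $\xi_{\alpha_r}^r$, $\eta_{\alpha_r}^r$ and $a_{\alpha_r\beta_r}$ follow immediately, and no induction is really needed.

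One point to fix: the preceding Lemma is set up with the \emph{codimension-one} step adjacent to the metallic ambient manifold (i.e.\ $\bar M$ has codimension $1$ in $\bar{\bar M}$, and $M$ has codimension $r$ in $\bar M$), not the other way round. Your proposed application, with $M_r\hookrightarrow M_{r-1}\hookrightarrow\bar M$ and the codimension-one step at the bottom, therefore does not invoke the Lemma as stated. The cleaner route is to apply the Lemma with $\bar{\bar M}=M_0$, $\bar M=M_1$, $M=M_r$, giving the case $i=1$, and then to observe that the same transitivity argument works with the $\Sigma_i$-structure on $M_i$ in place of $(J,g)$ on $M_0$, since only the orthogonal splitting is used and $T_i$ decomposes into tangential and normal parts just as $J$ does. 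This yields the general $i$ without difficulty, and the formula $a_{\alpha_r\beta_r}=g_r(T_{r-1}N_{\alpha_r},N_{\beta_r})$ emerges at the last step exactly as you describe.
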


\subsection{Invariant and anti-invariant submanifolds}

\subsubsection{Invariant submanifolds}

A submanifold $M$ of $\bar{M}$ is called \textit{invariant} if $J(T_xM)\subset T_xM$, for any $x\in M$.\\
It follows $J(T_xM^{\perp})\subset T_xM^{\perp}$, for any $x\in M$, because for any $U\in \Gamma(TM^{\perp})$, $g(X,JU)=g(JX, U)=0$, for any $X\in \Gamma(TM)$.

\begin{proposition} \cite{c}
An isometrically immersed submanifold $M$ of a metallic Riemannian manifold $(\bar{M},J, g)$ is invariant with respect to $J$ if and only if $(M,T,g)$ is metallic Riemannian manifold whenever $T$ is non-trivial.
\end{proposition}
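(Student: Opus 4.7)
The plan is to let Proposition \ref{pc} do all the heavy lifting: its item (7) already supplies $g(TX,Y)=g(X,TY)$ for free, so being a metallic Riemannian manifold reduces to the single identity $T^{2}=pT+qI_{\Gamma(TM)}$. By item (1) of the same proposition, that identity is in turn equivalent to the vanishing of the correction term $\sum_{\alpha=1}^{r}\eta_{\alpha}\otimes\xi_{\alpha}$. So the whole equivalence comes down to showing that this term vanishes exactly when $M$ is invariant.

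For the forward implication I would simply observe that invariance means $NX=(JX)^{\perp}=0$ for every $X\in\Gamma(TM)$, so $\eta_{\alpha}(X)=g(NX,N_{\alpha})=0$ for each $\alpha$; alternatively, from $JX=TX$ one applies $J$ once more, uses invariance again (so that $NTX=0$), and obtains $J^{2}X=T^{2}X$, which combined with $J^{2}=pJ+qI$ yields $T^{2}=pT+qI$ directly. For the converse, assume $T^{2}=pT+qI_{\Gamma(TM)}$; then by Proposition \ref{pc}(1), $\sum_{\alpha}\eta_{\alpha}(X)\xi_{\alpha}=0$ for every $X\in\Gamma(TM)$. Using $\xi_{\alpha}=tN_{\alpha}$ together with the adjunction $g(NX,U)=g(X,tU)$ from equation (9), one checks that $g(\xi_{\alpha},Y)=\eta_{\alpha}(Y)$, so pairing the above identity with $Y=X$ gives $\sum_{\alpha}\eta_{\alpha}(X)^{2}=0$. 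Positive-definiteness of $g$ then forces $\eta_{\alpha}=0$ for each $\alpha$, i.e.\ $NX=0$, i.e.\ $JX\in T_{x}M$, and $M$ is invariant.

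The non-triviality caveat on $T$ merely excludes the degenerate case $T\equiv 0$, which cannot satisfy $T^{2}=pT+qI$ anyway since $q\in\mathbb{N}^{*}$. There is no genuine obstacle in the argument; the only step that could look delicate is the converse, and the key move there is recognising that the $g$-duality between $\eta_{\alpha}$ and $\xi_{\alpha}$ upgrades the tensorial identity $\sum\eta_{\alpha}\otimes\xi_{\alpha}=0$ to the pointwise vanishing of each $\eta_{\alpha}$, which is precisely what is needed to close the circle.
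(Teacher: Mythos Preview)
Your argument is correct. The paper itself does not supply a proof of this proposition; it is simply quoted from \cite{c}, so there is nothing to compare against directly. That said, your route via Proposition~\ref{pc} is exactly the natural one: item (7) gives the compatibility $g(TX,Y)=g(X,TY)$ automatically, item (1) reduces the metallic condition $T^{2}=pT+qI$ to the vanishing of $\sum_{\alpha}\eta_{\alpha}\otimes\xi_{\alpha}$, and item (6) (the duality $\eta_{\alpha}(X)=g(\xi_{\alpha},X)$) lets you upgrade that vanishing to $\eta_{\alpha}\equiv 0$ via the sum-of-squares trick. This is precisely how the result is proved in the original reference, so your reconstruction matches the intended argument.
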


\begin{proposition}\label{pi}
If $M$ is an isometrically immersed invariant submanifold of the locally metallic Riemannian manifold $(\bar{M},J, g)$, then for any $X$, $Y\in \Gamma(TM)$:
\begin{equation}
\nabla J=0,
\end{equation}
\begin{equation}
\sum_{\alpha=1}^rh_{\alpha}(X,JY)N_{\alpha}=\sum_{\alpha=1}^rh_{\alpha}(X,Y)JN_{\alpha}=\sum_{\alpha=1}^rh_{\alpha}(JX,Y)N_{\alpha},
\end{equation}
\begin{equation}
h_{\alpha}(JX,JY)=ph_{\alpha}(X,JY)+qh_{\alpha}(X,Y), \ {\textit \ for \ any} \ 1\leq \alpha\leq r.
\end{equation}
\end{proposition}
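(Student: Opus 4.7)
The plan is to exploit the invariance condition to drastically simplify every tangential/normal projection operator appearing in the earlier formulas. Since $J(T_xM)\subset T_xM$ (and hence also $J(T_xM^\perp)\subset T_xM^\perp$), we have $T=J|_{\Gamma(TM)}$, $N=0$, $t=0$, $n=J|_{\Gamma(TM^\perp)}$, so in particular $\xi_\alpha=tN_\alpha=0$ and $\eta_\alpha(Y)=g(NY,N_\alpha)=0$ for every $Y\in\Gamma(TM)$ and every $1\leq\alpha\leq r$.

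First, to establish $\nabla J=0$, I would invoke Proposition \ref{t}(1), which gives $(\nabla_XT)Y=\sum_{\alpha=1}^r[\eta_\alpha(Y)A_{N_\alpha}X+h_\alpha(X,Y)\xi_\alpha]$; both summands vanish by the observations above, and since $T$ agrees with $J$ on $\Gamma(TM)$ for an invariant submanifold, this yields $\nabla J=0$.

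Second, for the chain of identities relating $h_\alpha$ and $J$, I would read off the normal component of identity (\ref{e4}), which is zero on the nose because $\bar M$ is locally metallic. Using $\eta_\alpha(Y)=0$, the fact that $N(\nabla_XY)=0$ (since $\nabla_XY\in\Gamma(TM)$ and $M$ is invariant), and $nN_\alpha=JN_\alpha$, the normal component collapses to $\sum_{\alpha=1}^r h_\alpha(X,JY)N_\alpha=\sum_{\alpha=1}^r h_\alpha(X,Y)JN_\alpha$. Interchanging the roles of $X$ and $Y$ and invoking the symmetry of each $h_\alpha$ gives the remaining equality $\sum_{\alpha=1}^r h_\alpha(JX,Y)N_\alpha=\sum_{\alpha=1}^r h_\alpha(X,Y)JN_\alpha$.

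For the third identity, I would substitute $JY$ in place of $Y$ in the equality $\sum_\alpha h_\alpha(X,JY)N_\alpha=\sum_\alpha h_\alpha(JX,Y)N_\alpha$, obtaining $\sum_\alpha h_\alpha(X,J^2Y)N_\alpha=\sum_\alpha h_\alpha(JX,JY)N_\alpha$, and then apply the defining relation $J^2=pJ+qI$ together with the linear independence (orthonormality) of $\{N_1,\dots,N_r\}$ to equate coefficients $N_\alpha$ by $N_\alpha$. I do not foresee a genuine obstacle: the entire argument reduces to the observation that invariance annihilates every $N$-, $t$-, $\eta_\alpha$-, and $\xi_\alpha$-term in the general formulas of Propositions \ref{p3} and \ref{t}; the only care required is bookkeeping of which projection vanishes and a single coefficient comparison at the end.
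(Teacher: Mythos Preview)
Your proposal is correct and follows essentially the same route as the paper: both arguments observe that invariance forces $N=0$, $t=0$ (hence $\eta_\alpha=0$, $\xi_\alpha=0$), then read off the tangential and normal components of $(\bar\nabla_XJ)Y=0$ from the general decomposition (whether cited as Proposition~\ref{p3}, Proposition~\ref{t}, or equation~(\ref{e4})) to obtain $\nabla T=0$ and $\sum_\alpha h_\alpha(X,JY)N_\alpha=\sum_\alpha h_\alpha(X,Y)JN_\alpha$, and finish via symmetry of $h_\alpha$ and the relation $J^2=pJ+qI$. The only cosmetic difference is which labeled formula you cite; the underlying computation is identical.
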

\begin{proof}
Let $X$, $Y\in \Gamma(TM)$. Then $\nabla_XY$, $JX$, $JY$, $J(\nabla_XY)\in \Gamma(TM)$ and $JN_{\alpha}\in \Gamma(TM^{\perp})$, for any $1\leq \alpha\leq r$. If $\bar{\nabla}J=0$, using Proposition \ref{p3}:
$$(\nabla_XJ)Y:=\nabla_XJY-J(\nabla_XY)=\nabla_XTY-T(\nabla_XY):=(\nabla_XT)Y=0.$$

Also:
$$\sum_{\alpha=1}^rh_{\alpha}(X,Y)nN_{\alpha}-\sum_{\alpha=1}^rh_{\alpha}(X,TY)N_{\alpha}=0$$
which implies:
$$\sum_{\alpha=1}^rh_{\alpha}(X,JY)N_{\alpha}=\sum_{\alpha=1}^rh_{\alpha}(X,Y)JN_{\alpha}=\sum_{\alpha=1}^rh_{\alpha}(Y,X)JN_{\alpha}=$$$$=
\sum_{\alpha=1}^rh_{\alpha}(Y,JX)N_{\alpha}=\sum_{\alpha=1}^rh_{\alpha}(JX,Y)N_{\alpha}$$
and
$$h_{\alpha}(JX,JY)=h_{\alpha}(X,J^2Y)=ph_{\alpha}(X,JY)+qh_{\alpha}(X,Y).$$
\end{proof}

\begin{remark}
If $M$ is an isometrically immersed invariant $n$-dimensional submanifold of codimension $r$ of the $(n+r)$-dimensional metallic Riemannian manifold $(\bar{M},J, g)$ and $\Sigma:=(T,g,\eta_{\alpha},\xi_{\alpha},(a_{\alpha\beta}))_{1\leq \alpha,\beta\leq r}$ is the induced structure on $M$, then $\xi_{\alpha}$ are zero vector fields and the $1$-forms $\eta_{\alpha}$ vanish identically on $M$, for any $1\leq \alpha \leq r$. In this case, for any $X\in \Gamma(TM)$, (\ref{e2}) and (\ref{e3}) become:
\begin{equation}
JX=TX, \ \ JN_{\alpha}=\sum_{\beta=1}^ra_{\alpha\beta}N_{\beta}, \ {\textit \ for \ any} \ 1\leq \alpha\leq r.
\end{equation}

Also, the $\Sigma$-structure satisfies:
\begin{enumerate}
  \item $T^2=pT+qI_{\Gamma(TM)}$;
  \item $a_{\alpha\beta}=a_{\beta\alpha}$;
  \item $\sum_{\gamma=1}^ra_{\alpha\gamma}a_{\gamma\beta}=q\delta_{\alpha\beta}+pa_{\alpha\beta}$;
  \item $X(a_{\alpha\beta})=g((\bar{\nabla}_XJ)N_{\alpha},N_{\beta})-
\sum_{\gamma=1}^r[a_{\alpha\gamma}\lambda_{\gamma\beta}(X)+a_{\beta\gamma}\lambda_{\gamma\alpha}(X)]$;
  \item $g(TX,Y)=g(X,TY)$;
  \item $g(TX,TY)=pg(X,TY)+qg(X,Y)$,
  \end{enumerate}
for any $X$, $Y\in \Gamma(TM)$ and $1\leq \alpha,\beta\leq r$.
\end{remark}

Denoting by $\mathcal{J}:=\bar{\nabla}J$, from (\ref{e4}) we obtain:

\begin{proposition}\label{p8}
Let $M$ be an isometrically immersed invariant $n$-dimen\-sio\-nal submanifold of codimension $r$ of the $(n+r)$-dimensional metallic Riemannian manifold $(\bar{M},J, g)$ and $\Sigma:=(T,g,\eta_{\alpha}=0,\xi_{\alpha}=0,(a_{\alpha\beta}))_{1\leq \alpha,\beta\leq r}$ is the induced structure on $M$. Then:
\begin{enumerate}
  \item $\mathcal{J}(X,Y)^{T}=(\nabla_XT)Y$, \\
  $\mathcal{J}(X,Y)^{\perp}=
  \sum_{\alpha=1}^r[h_{\alpha}(X,TY)N_{\alpha}-h_{\alpha}(X,Y)nN_{\alpha}]$;
  \item $\mathcal{J}(X,N_{\alpha})^{T}=T(A_{N_{\alpha}}X)-\sum_{\beta=1}^ra_{\alpha\beta}A_{N_{\beta}}X$, \\
  $\mathcal{J}(X,N_{\alpha})^{\perp}=\sum_{\beta=1}^r[X(a_{\alpha\beta})+\sum_{\gamma=1}^r(a_{\alpha\gamma}\lambda_{\gamma\beta}(X)+
  a_{\beta\gamma}\lambda_{\gamma\alpha}(X))]N_{\beta}$,
  \end{enumerate}
for any $X$, $Y\in \Gamma(TM)$ and $1\leq \alpha \leq r$.
\end{proposition}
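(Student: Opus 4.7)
The plan is to derive both items by specializing the generic formula (\ref{e4}) for $(\bar{\nabla}_X J)Y$ (with $Y$ tangent) to the invariant setting, and then to run an analogous direct computation for the case $Y=N_\alpha$. The whole argument rests on exploiting what invariance forces: $NX=0$ for any tangent $X$ (since $JX\in\Gamma(TM)$), $\xi_\alpha=tN_\alpha=0$ for all $\alpha$, and $J(A_{N_\alpha}X)=T(A_{N_\alpha}X)$ (since $A_{N_\alpha}X$ is tangent, and on tangent vectors $J$ equals $T$).

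For item 1, I would simply substitute $g(NY,N_\alpha)=0$, $X(g(NY,N_\alpha))=0$, $tN_\alpha=0$, and $N(\nabla_XY)=0$ (the last because $\nabla_XY\in\Gamma(TM)$ and invariance gives $J(\nabla_XY)\in\Gamma(TM)$) into (\ref{e4}). The tangential bracket collapses to $\nabla_X TY-T(\nabla_XY)=(\nabla_XT)Y$, and the normal bracket collapses to $\sum_\alpha h_\alpha(X,TY)N_\alpha-\sum_\alpha h_\alpha(X,Y)nN_\alpha$. This is immediate once the vanishings are listed.

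For item 2, I would compute $(\bar{\nabla}_XJ)N_\alpha=\bar{\nabla}_X(JN_\alpha)-J(\bar{\nabla}_XN_\alpha)$ directly. By invariance, $JN_\alpha=\sum_\beta a_{\alpha\beta}N_\beta$, so Leibniz plus the Weingarten formula (\ref{e5}) give
\[
\bar{\nabla}_X(JN_\alpha)=\sum_\beta X(a_{\alpha\beta})N_\beta-\sum_\beta a_{\alpha\beta}A_{N_\beta}X+\sum_{\beta,\gamma}a_{\alpha\beta}\lambda_{\beta\gamma}(X)N_\gamma,
\]
while (\ref{e5}) together with $J(A_{N_\alpha}X)=T(A_{N_\alpha}X)$ and $JN_\beta=\sum_\gamma a_{\beta\gamma}N_\gamma$ give
\[
J(\bar{\nabla}_XN_\alpha)=-T(A_{N_\alpha}X)+\sum_{\beta,\gamma}\lambda_{\alpha\beta}(X)a_{\beta\gamma}N_\gamma.
\]
Subtracting, the tangential part is $T(A_{N_\alpha}X)-\sum_\beta a_{\alpha\beta}A_{N_\beta}X$, as claimed.

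The only mildly delicate step is reconciling the normal part with the stated expression. After reindexing both double sums so that the free normal index is $\beta$, what appears is $\sum_{\beta}\bigl[X(a_{\alpha\beta})+\sum_\gamma a_{\alpha\gamma}\lambda_{\gamma\beta}(X)-\sum_\gamma\lambda_{\alpha\gamma}(X)a_{\gamma\beta}\bigr]N_\beta$. To bring the last term into the stated form $\sum_\gamma a_{\beta\gamma}\lambda_{\gamma\alpha}(X)$, I apply the two symmetries already at hand, namely $\lambda_{\alpha\gamma}=-\lambda_{\gamma\alpha}$ (skew-symmetry of the normal connection 1-forms) and $a_{\gamma\beta}=a_{\beta\gamma}$ (Proposition \ref{pc}, item 3). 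This relabeling is the one place an error could creep in, so it is the main (though small) obstacle; once it is carried out, the formula matches verbatim, completing the proof.
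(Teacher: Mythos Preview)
Your proposal is correct and follows essentially the same route as the paper, which simply states that the proposition follows from (\ref{e4}) (and, implicitly for item~2, from the analogous decomposition of $(\bar{\nabla}_XJ)U$ computed in the proof of Proposition~\ref{p3}). Your direct computation for item~2 and the use of $\lambda_{\alpha\gamma}=-\lambda_{\gamma\alpha}$ together with $a_{\gamma\beta}=a_{\beta\gamma}$ to rewrite the last normal term is exactly the bookkeeping the paper leaves to the reader.
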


\pagebreak

From Proposition \ref{t} and Proposition \ref{p8} we deduce that:
\begin{corollary}
If $M$ is an isometrically immersed invariant $n$-dimen\-sio\-nal submanifold of codimension $r$ of the $(n+r)$-dimensional locally metallic Riemannian manifold $(\bar{M},J, g)$, $\Sigma:=(T,g,\eta_{\alpha}=0,\xi_{\alpha}=0,(a_{\alpha\beta}))_{1\leq \alpha,\beta\leq r}$ is the induced structure on $M$, then:
\begin{enumerate}
  \item $\nabla T=0$, \\
  $\sum_{\alpha=1}^r[h_{\alpha}(X,TY)N_{\alpha}-h_{\alpha}(X,Y)nN_{\alpha}]=0$;
  \item $T(A_{N_{\alpha}}X)-\sum_{\beta=1}^ra_{\alpha\beta}A_{N_{\beta}}X=0$, \\
  $X(a_{\alpha\beta})+\sum_{\gamma=1}^r[a_{\alpha\gamma}\lambda_{\gamma\beta}(X)+
  a_{\beta\gamma}\lambda_{\gamma\alpha}(X)]=0$,
  \end{enumerate}
for any $X$, $Y\in \Gamma(TM)$ and $1\leq \alpha, \beta \leq r$.
\end{corollary}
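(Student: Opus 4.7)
The plan is to derive this corollary as a direct specialization: combine the hypothesis that $(\bar{M},J,g)$ is locally metallic (so $\bar{\nabla}J = 0$, i.e., the tensor $\mathcal{J}$ of Proposition \ref{p8} vanishes identically) with the computations already done in Propositions \ref{t} and \ref{p8}, taking into account that in the invariant case the induced $\Sigma$-structure satisfies $\eta_{\alpha} = 0$ and $\xi_{\alpha} = 0$ for all $\alpha$ (as observed in the Remark preceding Proposition \ref{p8}).

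Concretely, I would first recall that for an invariant submanifold, every tangent vector $X$ satisfies $JX = TX \in \Gamma(TM)$, so $NX = 0$, which forces $\eta_{\alpha}(X) = g(NX, N_{\alpha}) = 0$ and consequently $\xi_{\alpha} = tN_{\alpha} \in \Gamma(TM^{\perp})^T = 0$ (since $JN_{\alpha}$ is normal, by invariance of the normal bundle proved just before Proposition~\ref{pi}). Plugging these vanishings into part (1) of Proposition \ref{t} immediately yields $(\nabla_X T)Y = 0$, which is the first assertion $\nabla T = 0$ of item (1).

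Next I would invoke Proposition \ref{p8}, whose hypothesis does not require local metallicity — it expresses the tangential and normal components of $\mathcal{J}(X,Y)$ and $\mathcal{J}(X,N_{\alpha})$ purely in terms of $T$, $h_{\alpha}$, $A_{N_{\alpha}}$, $a_{\alpha\beta}$ and $\lambda_{\alpha\beta}$. Since now $\bar{\nabla}J = 0$, both $\mathcal{J}(X,Y)$ and $\mathcal{J}(X,N_{\alpha})$ are identically zero, so each of the four decompositions yields an equation. Setting $\mathcal{J}(X,Y)^{\perp} = 0$ gives the second equality of item (1); setting $\mathcal{J}(X,N_{\alpha})^{T} = 0$ gives the first equality of item (2); and reading off the coefficient of each $N_{\beta}$ in $\mathcal{J}(X,N_{\alpha})^{\perp} = 0$ (using that the $N_{\beta}$ form an orthonormal frame of the normal bundle) yields the second equality of item (2).

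There is no real obstacle here — this is a bookkeeping corollary. The only point that requires mild care is the symmetry manipulation for the second equation of item (2): the antisymmetry $\lambda_{\alpha\beta} = -\lambda_{\beta\alpha}$ and the symmetry $a_{\alpha\beta} = a_{\beta\alpha}$ (Proposition \ref{pc}) must be tracked to make sure the pair $(\alpha,\beta)$ appears symmetrically in the final formula as stated. Once this is checked, the four identities follow immediately by collecting tangential and normal components and the proof is complete.
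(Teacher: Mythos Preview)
Your proposal is correct and matches the paper's own argument exactly: the paper simply states that the corollary follows ``from Proposition \ref{t} and Proposition \ref{p8}'', which is precisely the specialization you carry out by setting $\mathcal{J}=\bar{\nabla}J=0$ and $\eta_{\alpha}=\xi_{\alpha}=0$ in those formulas and reading off tangential and normal components.
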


From $$d\eta_{\alpha}(X,Y)=-g((TA_{N_{\alpha}}-A_{N_{\alpha}}T)X,Y)+\sum_{\beta=1}^r[\lambda_{\alpha\beta}(X)\eta_{\beta}(Y)-
\lambda_{\alpha\beta}(Y)\eta_{\beta}(X)],$$ for any $X$, $Y\in \Gamma(TM)$, we obtain that:

\begin{proposition}
Let $M$ be an isometrically immersed invariant $n$-dimen\-sio\-nal submanifold of codimension $r$ of the $(n+r)$-dimensional locally metallic Riemannian manifold $(\bar{M},J, g)$ and $\Sigma:=(T,g,\eta_{\alpha}=0,\xi_{\alpha}=0,(a_{\alpha\beta}))_{1\leq \alpha,\beta\leq r}$ is the induced structure on $M$. Then
$$TA_{N_{\alpha}}=A_{N_{\alpha}}T,$$ for any $1\leq \alpha\leq r$ and the Nijenhuis tensor field of $T$ vanishes identically on $M$ (i.e. $N_{T}(X,Y)=0$, for any $X$, $Y \in \Gamma(TM)$).
\end{proposition}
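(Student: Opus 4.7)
The plan is to exploit two ingredients already in hand: the explicit formula for $d\eta_\alpha$ displayed immediately before the statement, and the expression for $N_T$ provided by Proposition \ref{p7}. The key structural observation, recorded in the remark preceding Proposition \ref{p8}, is that on an invariant submanifold the induced forms and vector fields $\eta_\alpha$, $\xi_\alpha$ vanish identically for every $\alpha$. Thus the proof should amount to feeding these vanishings into the two pre-existing identities.

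For the commutation $TA_{N_\alpha}=A_{N_\alpha}T$, I would substitute $\eta_\beta\equiv 0$ into the displayed formula for $d\eta_\alpha$. The $\lambda_{\alpha\beta}$-terms drop out and the left-hand side is also zero, so the identity collapses to $g((TA_{N_\alpha}-A_{N_\alpha}T)X,Y)=0$ for every $X,Y\in\Gamma(TM)$. Since both $T$ and $A_{N_\alpha}$ preserve $TM$, the vector $(TA_{N_\alpha}-A_{N_\alpha}T)X$ lies in $\Gamma(TM)$, and feeding it back in as $Y$ together with positive-definiteness of $g$ forces it to vanish.

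For $N_T\equiv 0$, I would simply read off the conclusion from Proposition \ref{p7}: each of the three terms in the stated expression for $N_T(X,Y)$ carries either a factor of $\xi_\alpha$ or of $\eta_\alpha$, so every summand is automatically zero on an invariant submanifold. Notice that this conclusion does not even require the commutation established above; the commutation is an independent, slightly sharper byproduct of invariance. The main step with any content is the first, but it is essentially two lines, so there is no genuine obstacle beyond bookkeeping.
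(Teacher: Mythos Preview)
Your proposal is correct and matches the paper's own argument: the commutation $TA_{N_\alpha}=A_{N_\alpha}T$ is read off from the displayed formula for $d\eta_\alpha$ once all $\eta_\beta$ vanish, and the vanishing of $N_T$ then follows from Proposition~\ref{p7} since every summand there carries a factor $\xi_\alpha$ or $\eta_\alpha$. Your additional remark that the $N_T$ conclusion is independent of the commutation is a nice observation not made explicit in the paper.
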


\begin{proposition}
Let $M$ be an isometrically immersed invariant $n$-dimen\-sio\-nal submanifold of codimension $r$ of the $(n+r)$-dimensional metallic Riemannian manifold $(\bar{M},J, g)$ and $\Sigma:=(T,g,\eta_{\alpha}=0,\xi_{\alpha}=0,(a_{\alpha\beta}))_{1\leq \alpha,\beta\leq r}$ is the induced structure on $M$. Then the components $N^{(2)}$, $N^{(3)}$ and $N^{(4)}$ vanish identically on $M$. Moreover, if
$N_T=0$, then $N^{(1)}$ vanishes, too, on $M$. In particular, this happens if the normal connection $\nabla^{\perp}$ on the normal
bundle vanishes identically (i.e. $ \lambda_{\alpha\beta}=0$, for every
$1\leq \alpha, \beta \leq r$).
\end{proposition}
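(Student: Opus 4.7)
The strategy is a direct substitution argument: on an invariant submanifold, the preceding remark records that $\eta_{\alpha} \equiv 0$ and $\xi_{\alpha} \equiv 0$ for every $1\leq\alpha\leq r$, so I would plug these vanishings into the four defining formulas $N^{(1)}, N^{(2)}, N^{(3)}, N^{(4)}$ recalled just before the statement.

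First I would dispense with $N^{(2)}, N^{(3)}, N^{(4)}$. Each is a Lie derivative whose integrand or base direction is identically zero. Specifically, $N_{\alpha}^{(3)}(X)=(\mathcal{L}_{\xi_{\alpha}}T)X$ vanishes since the flow direction $\xi_{\alpha}$ is zero, and $N_{\alpha\beta}^{(4)}(X)=(\mathcal{L}_{\xi_{\alpha}}\eta_{\beta})X$ vanishes for the same reason (and also because $\eta_{\beta}=0$). For $N_{\alpha}^{(2)}(X,Y)=(\mathcal{L}_{TX}\eta_{\alpha})Y-(\mathcal{L}_{TY}\eta_{\alpha})X$, each Lie derivative is of the identically zero $1$-form $\eta_{\alpha}$, so both summands vanish. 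No further computation is needed.

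Next I would handle $N^{(1)}$. Writing $N^{(1)}(X,Y)=N_{T}(X,Y)-2\sum_{\alpha=1}^{r}d\eta_{\alpha}(X,Y)\xi_{\alpha}$, the sum on the right collapses because $\xi_{\alpha}=0$ (the vanishing of $d\eta_{\alpha}$ from $\eta_{\alpha}=0$ is a redundant further justification). Hence $N^{(1)}(X,Y)=N_{T}(X,Y)$, and the hypothesis $N_{T}=0$ immediately yields $N^{(1)}=0$.

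The final ``in particular'' assertion, that $\lambda_{\alpha\beta}\equiv 0$ forces $N_{T}=0$, is where I expect the real work. The natural route is to apply Proposition \ref{p7}: its formula expresses $N_{T}(X,Y)$ as a sum of three terms, each carrying an explicit $\eta_{\alpha}$ or $\xi_{\alpha}$ factor; on the invariant submanifold every such factor is zero, so $N_{T}=0$ follows. The obstacle is that Proposition \ref{p7} is stated for a \emph{locally} metallic ambient, while here $\bar{M}$ is only metallic Riemannian, so one should either (i) verify that in the invariant case the derivation of that formula still goes through when $\lambda_{\alpha\beta}=0$ (since the extra $\bar{\nabla}J$ contributions that appear in the general metallic case are fed, through \eqref{e5} and Proposition \ref{p8}, by the normal connection), or (ii) compute $N_{T}$ directly from $N_{T}(X,Y)=(\nabla_{TX}T)Y-(\nabla_{TY}T)X-T(\nabla_{X}T)Y+T(\nabla_{Y}T)X$ and show that the tangential parts of the relevant $(\bar{\nabla}J)$-terms cancel once $\lambda_{\alpha\beta}=0$ is imposed. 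Either way, the substantive calculation is the $\lambda$-dependence of $\nabla T$ through $\mathcal{J}(X,N_{\alpha})$; once that cancellation is in place, the invariance kills the surviving $\eta_{\alpha},\xi_{\alpha}$ factors and the conclusion follows.
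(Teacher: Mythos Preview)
The paper states this proposition without a proof environment, so there is no argument there to compare against directly. Your treatment of $N^{(2)}$, $N^{(3)}$, $N^{(4)}$ and of the reduction $N^{(1)}=N_T$ is exactly right: on an invariant submanifold $\eta_\alpha\equiv 0$ and $\xi_\alpha\equiv 0$, so every term in the four defining formulas collapses by substitution.

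The gap is in your handling of the ``in particular'' clause. Your route (i) assumes that the extra $\bar\nabla J$-contributions to $(\nabla_X T)Y$ are ``fed by the normal connection'', but Proposition~\ref{p8} gives $\mathcal J(X,Y)^T=(\nabla_X T)Y$ for tangent $X,Y$ with \emph{no} $\lambda_{\alpha\beta}$ appearing; the normal connection enters only in $\mathcal J(X,N_\alpha)^\perp$. More decisively, on an invariant submanifold every bracket $[JX,Y]$, $[X,JY]$, $[JX,JY]$, $[X,Y]$ of tangent fields is again tangent, and $J$ agrees with $T$ on tangent vectors, so term by term
\[
N_T(X,Y)=N_J(X,Y)\qquad\text{for all }X,Y\in\Gamma(TM).
\]
Thus $N_T$ is just the restriction of the ambient Nijenhuis tensor $N_J$ and is entirely insensitive to $\lambda_{\alpha\beta}$; imposing $\lambda_{\alpha\beta}=0$ cannot force $N_T=0$ in the merely metallic case. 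Neither your route (i) nor (ii) will produce the needed cancellation, because there is none to find. The ``in particular'' assertion appears to require an additional hypothesis---integrability of $J$, or the locally metallic condition already used in the immediately preceding proposition (where indeed $N_T=0$ is obtained without any condition on $\lambda_{\alpha\beta}$). You have correctly located the difficulty; the honest resolution is to flag that the stated hypothesis $\lambda_{\alpha\beta}=0$ does not by itself yield $N_T=0$.
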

\pagebreak

\subsubsection{Anti-invariant submanifolds}

A submanifold $M$ of $\bar{M}$ is called \textit{anti-invariant} if $J(T_xM)\subset T_xM^{\perp}$, for any $x\in M$.

\begin{proposition}\label{pa}
If $M$ is an isometrically immersed anti-invariant submanifold of the locally metallic Riemannian manifold $(\bar{M},J, g)$, then for any $X$, $Y\in \Gamma(TM)$:
\begin{equation}
\sum_{\alpha=1}^rh_{\alpha}(X,Y)tN_{\alpha}=-\sum_{\alpha=1}^rg(JY,N_{\alpha})A_{N_{\alpha}}X,
\end{equation}
\begin{equation}
\sum_{\alpha=1}^rh_{\alpha}(X,Y)nN_{\alpha}=\sum_{\alpha=1}^rg(JY,N_{\alpha})\nabla_X^{\perp}N_{\alpha}+\sum_{\alpha=1}^rX(g(JY,N_{\alpha}))-J(\nabla_XY).
\end{equation}
\end{proposition}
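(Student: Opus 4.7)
The plan is to combine the locally-metallic hypothesis $\bar{\nabla}J=0$ with the anti-invariance condition $J(T_xM)\subset T_xM^{\perp}$ and then split the resulting identity into tangential and normal components. This is the exact analogue of how Proposition~\ref{pi} was obtained in the invariant case, so I would model the argument on that one.

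First, I would translate anti-invariance into the local data of Section~3.1: for every $Y\in\Gamma(TM)$ the tangential part $TY$ of $JY$ vanishes, so $T\equiv 0$ on $TM$, and $NY=JY$. Consequently each of the tangential ingredients $\nabla_XTY$, $T(\nabla_XY)$, and $h_\alpha(X,TY)$ appearing in formula~(\ref{e4}) collapses to zero, while every $g(NY,N_\alpha)$ becomes $g(JY,N_\alpha)$.

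Second, I would invoke the locally-metallic hypothesis, which gives $(\bar{\nabla}_XJ)Y=0$, so the left-hand side of~(\ref{e4}) also vanishes. What remains is a single equation of the form $[\text{tangent}]+[\text{normal}]=0$ in $\Gamma(T\bar{M})=\Gamma(TM)\oplus\Gamma(TM^{\perp})$, whose two components must vanish separately. The tangential block reads $-\sum_\alpha g(JY,N_\alpha)A_{N_\alpha}X-\sum_\alpha h_\alpha(X,Y)\,tN_\alpha=0$, which is exactly the first claimed identity after moving one sum across. For the normal block I would rewrite $\sum_\beta\lambda_{\alpha\beta}(X)N_\beta$ as $\nabla^{\perp}_XN_\alpha$, and observe that because $\nabla_XY\in\Gamma(TM)$ the anti-invariance forces $N(\nabla_XY)=J(\nabla_XY)$; rearranging then yields the second claimed identity.

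No conceptual obstacle is expected; the only point requiring care is the bookkeeping of the tangent/normal split in~(\ref{e4}) together with the sign conventions inherited from the Weingarten formula~(\ref{e5}). In short, the entire proposition is a direct specialization of the master identity~(\ref{e4}) to the anti-invariant setting, symmetric to the specialization carried out in Proposition~\ref{pi}.
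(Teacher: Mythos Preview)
Your proposal is correct and follows essentially the same route as the paper. The paper invokes Proposition~\ref{p3} (which is just the locally-metallic specialization of the master identity~(\ref{e4})) and then sets $T=0$, $NY=JY$, $N(\nabla_XY)=J(\nabla_XY)$ exactly as you do; your additional remark that $\sum_{\beta}\lambda_{\alpha\beta}(X)N_{\beta}=\nabla^{\perp}_XN_{\alpha}$ is precisely the rewriting already built into Proposition~\ref{p3}(1b).
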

\begin{proof}
Let $X$, $Y\in \Gamma(TM)$. Then $\nabla_XY\in \Gamma(TM)$, $J X$, $J{Y}$, $J(\nabla_XY)\in \Gamma(TM^{\perp})$. If $\bar{\nabla}J=0$, using Proposition \ref{p3}:
$$\sum_{\alpha=1}^rg(NY,N_{\alpha})A_{N_{\alpha}}X+\sum_{\alpha=1}^rh_{\alpha}(X,Y)tN_{\alpha}=0.$$

Also:
$$\sum_{\alpha=1}^rg(NY,N_{\alpha})\nabla^{\perp}_XN_{\alpha}=N(\nabla_XY)+\sum_{\alpha=1}^rh_{\alpha}(X,Y)nN_{\alpha}-
      \sum_{\alpha=1}^rX(g(NY,N_{\alpha}))N_{\alpha}.$$
\end{proof}

\begin{remark}
If $M$ is an isometrically immersed anti-invariant $n$-dimensional submanifold of codimension $r$ of the $(n+r)$-dimensional metallic Riemannian manifold $(\bar{M},J, g)$ and $\Sigma:=(T,g,\eta_{\alpha},\xi_{\alpha},(a_{\alpha\beta}))_{1\leq \alpha,\beta\leq r}$ is the induced structure on $M$, then $T$ vanishes identically on $M$. In this case, for any $X\in \Gamma(TM)$, (\ref{e2}) becomes:
\begin{equation}
JX=\sum_{\alpha=1}^r\eta_{\alpha}(X)N_{\alpha}.
\end{equation}

Also, the $\Sigma$-structure satisfies:
\begin{enumerate}
  \item $\sum_{\alpha=1}^r\eta_{\alpha}\otimes \xi_{\alpha}=qI_{\Gamma(TM)}$;
  \item $\sum_{\beta=1}^ra_{\alpha\beta}\eta_{\beta}(X)=p\eta_{\alpha}(X)$;
  \item $a_{\alpha\beta}=a_{\beta\alpha}$;
  \item $\eta_{\beta}(\xi_{\alpha})=q\delta_{\alpha\beta}+pa_{\alpha\beta}-\sum_{\gamma=1}^ra_{\alpha\gamma}a_{\gamma\beta}$;
  \item $\sum_{\beta=1}^ra_{\alpha\beta}\xi_{\beta}=p\xi_{\alpha}$;
  \item $X(a_{\alpha\beta})=g((\bar{\nabla}_XJ)N_{\alpha},N_{\beta})-[h_{\alpha}(X,\xi_{\beta})+h_{\beta}(X,\xi_{\alpha})]-
\sum_{\gamma=1}^r[a_{\alpha\gamma}\lambda_{\gamma\beta}(X)+a_{\beta\gamma}\lambda_{\gamma\alpha}(X)]$;
  \item $\eta_{\alpha}(X)=i_{\xi_{\alpha}}g(X)$,
 \end{enumerate}
for any $X$, $Y\in \Gamma(TM)$ and $1\leq \alpha,\beta\leq r$.
\end{remark}

Denoting by $\mathcal{J}:=\bar{\nabla}J$, from (\ref{e4}) we obtain:

\begin{proposition}\label{p9}
Let $M$ be an isometrically immersed anti-invariant $n$-dimensional submanifold of codimension $r$ of the $(n+r)$-dimensional metallic Riemannian manifold $(\bar{M},J, g)$ and $\Sigma:=(T=0,g,\eta_{\alpha},\xi_{\alpha},(a_{\alpha\beta}))_{1\leq \alpha,\beta\leq r}$ is the induced structure on $M$. Then:
\begin{enumerate}
  \item $\mathcal{J}(X,Y)^{T}=-\sum_{\alpha=1}^r\eta_{\alpha}(Y)A_{N_{\alpha}}X-\sum_{\alpha=1}^rh_{\alpha}(X,Y)\xi_{\alpha}$, \\ $\mathcal{J}(X,Y)^{\perp}=\sum_{\alpha=1}^rX(\eta_{\alpha}(Y))N_{\alpha}
-\sum_{1\leq \alpha,\beta\leq r}\lambda_{\alpha\beta}(X)\eta_{\beta}(Y)N_{\alpha}-\sum_{\alpha=1}^rh_{\alpha}(X,Y)nN_{\alpha}-N(\nabla_XY)$;
   \item $\mathcal{J}(X,N_{\alpha})^{T}=\nabla_X\xi_{\alpha}-\sum_{\beta=1}^ra_{\alpha\beta}A_{N_{\beta}}X-
      \sum_{\beta=1}^r\lambda_{\alpha\beta}(X)\xi_{\beta}$, \\
      $\mathcal{J}(X,N_{\alpha})^{\perp}=
      \sum_{\beta=1}^r[X(a_{\alpha\beta})+h_{\beta}(X,\xi_{\alpha})+
      \sum_{\gamma=1}^r(a_{\alpha\gamma}\lambda_{\gamma\beta}(X)+a_{\beta\gamma}\lambda_{\gamma\alpha}(X))]N_{\beta}+N(A_{N_{\alpha}}X)$,
  \end{enumerate}
for any $X$, $Y\in \Gamma(TM)$ and $1\leq \alpha\leq r$.
\end{proposition}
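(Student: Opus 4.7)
The plan is to treat Proposition \ref{p9} as a direct specialization of the general decomposition formula (\ref{e4}) from the Remark, together with an analogous expansion for $(\bar{\nabla}_X J)N_\alpha$. Since the hypothesis is that $\bar M$ is an arbitrary metallic Riemannian manifold (not locally metallic), we must keep the full expression $\mathcal{J}=\bar{\nabla}J$, but we can exploit the anti-invariance to kill all terms containing $T$. Under anti-invariance, Proposition \ref{pa} and the defining relation (\ref{e2}) give $T=0$, $TY=0$, $NY=JY$, and $\eta_\alpha(Y)=g(NY,N_\alpha)$ for $Y\in\Gamma(TM)$.

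For item 1, I would start from (\ref{e4}) and set $T\equiv 0$. The bracket labeled as the tangential component immediately collapses to $-\sum_\alpha g(NY,N_\alpha)A_{N_\alpha}X-\sum_\alpha h_\alpha(X,Y)tN_\alpha$, which after identifying $g(NY,N_\alpha)=\eta_\alpha(Y)$ and $tN_\alpha=\xi_\alpha$ yields exactly the stated formula for $\mathcal{J}(X,Y)^T$. The normal bracket of (\ref{e4}) drops the $h_\alpha(X,TY)N_\alpha$ term, leaves $\sum_\alpha X(\eta_\alpha(Y))N_\alpha-\sum_\alpha h_\alpha(X,Y)nN_\alpha-N(\nabla_XY)$, plus the double sum $\sum_{\alpha,\beta}g(NY,N_\alpha)\lambda_{\alpha\beta}(X)N_\beta$. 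To put this last term in the form displayed in the statement, I would swap the dummy indices $\alpha\leftrightarrow\beta$ and apply the antisymmetry $\lambda_{\beta\alpha}=-\lambda_{\alpha\beta}$, turning it into $-\sum_{\alpha,\beta}\lambda_{\alpha\beta}(X)\eta_\beta(Y)N_\alpha$, giving $\mathcal{J}(X,Y)^\perp$ as asserted.

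For item 2, I would repeat the calculation performed inside the proof of Proposition \ref{p3}(2), but without using $(\bar\nabla_X J)N_\alpha=0$. Writing $JN_\alpha=\xi_\alpha+\sum_\beta a_{\alpha\beta}N_\beta$ and expanding
$\mathcal{J}(X,N_\alpha)=\bar\nabla_X\xi_\alpha+\bar\nabla_X(\sum_\beta a_{\alpha\beta}N_\beta)-J(\bar\nabla_X N_\alpha)$
via the Gauss and Weingarten formulas (\ref{e5}), I would sort the resulting terms into tangential and normal parts. Setting $T=0$ at the end eliminates $T(A_{N_\alpha}X)$ from the tangent component and forces $J(A_{N_\alpha}X)=N(A_{N_\alpha}X)$ in the normal component. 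What remains on the tangent side is $\nabla_X\xi_\alpha-\sum_\beta a_{\alpha\beta}A_{N_\beta}X-\sum_\beta\lambda_{\alpha\beta}(X)\xi_\beta$, and on the normal side the combination $\sum_\beta[X(a_{\alpha\beta})+h_\beta(X,\xi_\alpha)]N_\beta+N(A_{N_\alpha}X)$ together with the cross term coming from $\lambda$ acting on the $a_{\alpha\beta}N_\beta$ expansion minus the cross term from $J$ applied to $\sum_\beta\lambda_{\alpha\beta}(X)N_\beta$.

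The one nontrivial bookkeeping step, and the only point that requires care, is simplifying the two double sums $\sum_{\gamma,\beta}a_{\alpha\gamma}\lambda_{\gamma\beta}(X)N_\beta-\sum_{\beta,\gamma}\lambda_{\alpha\beta}(X)a_{\beta\gamma}N_\gamma$ into the symmetric form $\sum_\beta\sum_\gamma[a_{\alpha\gamma}\lambda_{\gamma\beta}(X)+a_{\beta\gamma}\lambda_{\gamma\alpha}(X)]N_\beta$ displayed in the statement; this follows by relabeling the summation indices in the second sum and using both $a_{\alpha\beta}=a_{\beta\alpha}$ (from Proposition \ref{pc}(3)) and $\lambda_{\alpha\beta}=-\lambda_{\beta\alpha}$. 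Everything else is a bookkeeping exercise of substitution, so no genuine obstacle arises beyond this index manipulation.
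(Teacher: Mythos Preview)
Your proposal is correct and follows exactly the route the paper indicates: the paper offers no explicit proof of Proposition~\ref{p9} beyond the phrase ``from (\ref{e4}) we obtain,'' and your argument simply carries out that specialization in detail, setting $T=0$ in (\ref{e4}) for item~1 and re-running the computation behind Proposition~\ref{p3}(2) (without the locally-metallic hypothesis) for item~2. Your handling of the index relabelings with $a_{\alpha\beta}=a_{\beta\alpha}$ and $\lambda_{\alpha\beta}=-\lambda_{\beta\alpha}$ is correct and is indeed the only nontrivial bookkeeping step.
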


From Proposition \ref{t} and Proposition \ref{p9} we deduce that:
\begin{corollary}
If $M$ be an isometrically immersed anti-invariant $n$-dimen\-sio\-nal submanifold of codimension $r$ of the $(n+r)$-dimensional locally metallic Riemannian manifold $(\bar{M},J, g)$, $\Sigma:=(T=0,g,\eta_{\alpha},\xi_{\alpha},(a_{\alpha\beta}))_{1\leq \alpha,\beta\leq r}$ is the induced structure on $M$, then:
\begin{enumerate}
  \item $\sum_{\alpha=1}^r\eta_{\alpha}(Y)A_{N_{\alpha}}X+\sum_{\alpha=1}^rh_{\alpha}(X,Y)\xi_{\alpha}=0$, \\ $\sum_{\alpha=1}^rX(\eta_{\alpha}(Y))N_{\alpha}
-\sum_{1\leq \alpha,\beta\leq r}\lambda_{\alpha\beta}(X)\eta_{\beta}(Y)N_{\alpha}- \sum_{\alpha=1}^rh_{\alpha}(X,Y)nN_{\alpha}- N(\nabla_XY)=0$;
  \item $\nabla_X\xi_{\alpha}-\sum_{\beta=1}^ra_{\alpha\beta}A_{N_{\beta}}X-
      \sum_{\beta=1}^r\lambda_{\alpha\beta}(X)\xi_{\beta}=0$, \\
      $X(a_{\alpha\beta})+h_{\alpha}(X,\xi_{\beta})+h_{\beta}(X,\xi_{\alpha})+
      \sum_{\gamma=1}^r[a_{\alpha\gamma}\lambda_{\gamma\beta}(X)+a_{\beta\gamma}\lambda_{\gamma\alpha}(X)]=0$,\\
      $\sum_{\beta=1}^rh_{\alpha}(X,\xi_{\beta})N_{\beta}=N(A_{N_{\alpha}}X)$,
  \end{enumerate}
for any $X$, $Y\in \Gamma(TM)$ and $1\leq \alpha\leq r$.
\end{corollary}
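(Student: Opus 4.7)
The strategy is to specialize Proposition \ref{p9} and Proposition \ref{t} to the present hypotheses. Since $(\bar{M},J,g)$ is locally metallic Riemannian, by definition $\bar{\nabla}J = 0$, so the $(1,2)$-tensor $\mathcal{J}:=\bar{\nabla}J$ vanishes identically. Consequently, for every $X,Y\in\Gamma(TM)$ and every $1\le\alpha\le r$, each of the four vectors
$\mathcal{J}(X,Y)^{T}$, $\mathcal{J}(X,Y)^{\perp}$, $\mathcal{J}(X,N_{\alpha})^{T}$, $\mathcal{J}(X,N_{\alpha})^{\perp}$
computed in Proposition \ref{p9} must vanish, and since tangential and normal parts live in complementary subbundles, each of them vanishes separately.

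For item (1), setting the expression for $\mathcal{J}(X,Y)^{T}$ given by Proposition \ref{p9} equal to zero produces directly
$$\sum_{\alpha=1}^{r}\eta_{\alpha}(Y)A_{N_{\alpha}}X+\sum_{\alpha=1}^{r}h_{\alpha}(X,Y)\xi_{\alpha}=0,$$
and the vanishing of $\mathcal{J}(X,Y)^{\perp}$ is precisely the second identity of (1). For the first relation of item (2), I will simply set the tangential part $\mathcal{J}(X,N_{\alpha})^{T}=0$ in Proposition \ref{p9} and read off
$$\nabla_{X}\xi_{\alpha}-\sum_{\beta=1}^{r}a_{\alpha\beta}A_{N_{\beta}}X-\sum_{\beta=1}^{r}\lambda_{\alpha\beta}(X)\xi_{\beta}=0.$$

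The $X(a_{\alpha\beta})$-identity of (2) is nothing other than Proposition \ref{t}(4) rearranged; notice this does not use anti-invariance, it uses only local metallicity. The remaining identity $\sum_{\beta}h_{\alpha}(X,\xi_{\beta})N_{\beta}=N(A_{N_{\alpha}}X)$ will be obtained by substituting the expression for $X(a_{\alpha\beta})$ from Proposition \ref{t}(4) into the formula for $\mathcal{J}(X,N_{\alpha})^{\perp}$ in Proposition \ref{p9} and using that this normal vector vanishes; the two double sums involving $\lambda_{\gamma\beta}(X)$ and $\lambda_{\gamma\alpha}(X)$ cancel together with the $h_{\beta}(X,\xi_{\alpha})$ contribution, leaving $-\sum_{\beta}h_{\alpha}(X,\xi_{\beta})N_{\beta}+N(A_{N_{\alpha}}X)=0$, which is the claimed equality.

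There is essentially no genuine obstacle: the corollary is a repackaging of two results already established, and the only point needing some attention is the bookkeeping in the final substitution, where one must match the symmetric and antisymmetric roles of the indices $\alpha,\beta$ to see that exactly the $\lambda$-sums and one of the two $h$-terms cancel, isolating $\sum_{\beta}h_{\alpha}(X,\xi_{\beta})N_{\beta}$ against $N(A_{N_{\alpha}}X)$.
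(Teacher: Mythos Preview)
Your argument is correct and is exactly the approach indicated by the paper, which simply states that the corollary follows ``from Proposition~\ref{t} and Proposition~\ref{p9}'' without spelling out the details. You have supplied those details accurately, including the key substitution of Proposition~\ref{t}(4) into the expression for $\mathcal{J}(X,N_{\alpha})^{\perp}$ from Proposition~\ref{p9}, which indeed cancels the $\lambda$-sums and the $h_{\beta}(X,\xi_{\alpha})$ term to leave the final identity.
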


\begin{proposition}
Let $M$ be an isometrically immersed anti-invariant $n$-dimensional submanifold of codimension $r$ of the $(n+r)$-dimensional metallic Riemannian manifold $(\bar{M},J, g)$ and $\Sigma:=(T=0,g,\eta_{\alpha},\xi_{\alpha},(a_{\alpha\beta}))_{1\leq \alpha,\beta\leq r}$ is the induced structure on $M$. Then the components $N^{(2)}$ and $N^{(3)}$ vanish identically on $M$. Moreover, if $\xi_{\alpha}$ are parallel with respect to a symmetric linear connection, for any $1\leq \alpha \leq r$, then $N^{(1)}$ and $N^{(4)}$ vanish, too, on $M$.
\end{proposition}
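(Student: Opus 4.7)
The plan is to split the statement into two layers: the unconditional vanishing of $N^{(2)}$ and $N^{(3)}$, which is essentially formal once one observes that $T\equiv 0$; and the conditional vanishing of $N^{(1)}$ and $N^{(4)}$, which reduces to showing that the $\eta_\alpha$ are closed and Lie-invariant along the $\xi_\beta$'s.

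First, since $M$ is anti-invariant the induced tensor $T$ is the zero $(1,1)$-tensor on $M$. Plugging this into the definitions from the list immediately after Proposition~\ref{p7}, the vector field $TX$ is zero for every $X\in\Gamma(TM)$, so
$$N^{(2)}_\alpha(X,Y)=(\mathcal{L}_{TX}\eta_\alpha)Y-(\mathcal{L}_{TY}\eta_\alpha)X$$
vanishes because Lie derivatives along the zero vector field are zero. Similarly $(\mathcal{L}_{\xi_\alpha}T)X=\mathcal{L}_{\xi_\alpha}(TX)-T(\mathcal{L}_{\xi_\alpha}X)=0-0=0$, so $N^{(3)}_\alpha\equiv 0$. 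Still using $T=0$, every term in $N_T(X,Y)=[TX,TY]-T[TX,Y]-T[X,TY]+T^2[X,Y]$ vanishes identically, so $N_T\equiv 0$, and hence
$$N^{(1)}(X,Y)=-2\sum_{\alpha=1}^r d\eta_\alpha(X,Y)\,\xi_\alpha.$$
Thus the claim $N^{(1)}=0$ reduces to $d\eta_\alpha=0$ for every $\alpha$, and $N^{(4)}_{\alpha\beta}=(\mathcal{L}_{\xi_\alpha}\eta_\beta)$ must be shown to vanish in the Lie-derivative sense.

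For the conditional half, let $\nabla'$ be a symmetric linear connection on $M$ with $\nabla'\xi_\alpha=0$ for every $\alpha$. Using torsion-freeness one has the standard identities
$$d\eta_\alpha(X,Y)=(\nabla'_X\eta_\alpha)(Y)-(\nabla'_Y\eta_\alpha)(X),\qquad (\mathcal{L}_{\xi_\alpha}\eta_\beta)(X)=(\nabla'_{\xi_\alpha}\eta_\beta)(X)+\eta_\beta(\nabla'_X\xi_\alpha).$$
The parallelism hypothesis kills the second summand of the Lie-derivative formula outright. To kill the remaining $\nabla'$-covariant derivatives of the $\eta_\alpha$'s I would invoke the identification $\eta_\alpha=g(\xi_\alpha,\cdot)$ from Proposition~\ref{pc}(6) together with the metric compatibility of the connection (concretely, I would take $\nabla'$ to be the Levi-Civita connection of $g$, which is indeed both symmetric and compatible), obtaining $(\nabla'_V\eta_\alpha)(W)=g(\nabla'_V\xi_\alpha,W)=0$. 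Both $d\eta_\alpha$ and $\mathcal{L}_{\xi_\alpha}\eta_\beta$ then vanish, yielding $N^{(1)}=N^{(4)}=0$.

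The only delicate point is the interpretation of the clause ``parallel with respect to a symmetric linear connection'': strict torsion-freeness alone lets one rewrite $d\eta_\alpha$ and $\mathcal{L}_{\xi_\alpha}\eta_\beta$ via $\nabla'$, but to collapse them to zero one also needs metric compatibility, which is why I would read the hypothesis as furnishing (or specializing to) the Levi-Civita connection. Once this reading is accepted, the remaining calculations are a one-line substitution and no further analytic obstruction appears.
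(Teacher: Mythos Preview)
The paper states this proposition without proof, so there is nothing to compare against directly. Your argument is correct and is the natural one: once $T\equiv 0$, the vanishing of $N^{(2)}$, $N^{(3)}$ and $N_T$ is immediate from the definitions, and the reductions $N^{(1)}=-2\sum_\alpha d\eta_\alpha\,\xi_\alpha$ and $N^{(4)}_{\alpha\beta}=\mathcal{L}_{\xi_\alpha}\eta_\beta$ follow. Your use of the torsion-free identities
\[
d\eta_\alpha(X,Y)=(\nabla'_X\eta_\alpha)(Y)-(\nabla'_Y\eta_\alpha)(X),\qquad
(\mathcal{L}_{\xi_\alpha}\eta_\beta)(X)=(\nabla'_{\xi_\alpha}\eta_\beta)(X)+\eta_\beta(\nabla'_X\xi_\alpha)
\]
is exactly right, and the final step $(\nabla'_V\eta_\alpha)(W)=g(\nabla'_V\xi_\alpha,W)=0$ via $\eta_\alpha=g(\xi_\alpha,\cdot)$ is the intended mechanism.

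You are also right to flag the one genuine subtlety: the hypothesis as written (``parallel with respect to a symmetric linear connection'') does not literally force $\nabla'\eta_\alpha=0$ unless $\nabla'$ is in addition $g$-compatible, since the identification $\eta_\alpha=i_{\xi_\alpha}g$ only transports parallelism of $\xi_\alpha$ to parallelism of $\eta_\alpha$ through $\nabla' g=0$. Your reading---that the clause is meant to specialize to the Levi-Civita connection (or, equivalently, that one should assume $\nabla'$ is metric as well as symmetric)---is the only interpretation under which the conclusion holds, and it is consistent with the paper's systematic use of the Levi-Civita connection elsewhere. This is not a gap in your proof but a slight imprecision in the statement itself; your proposal handles it appropriately.
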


\subsection{Slant submanifolds}

The operator $T$ will essentially be involved in characterizing the slant submanifolds.

We say that $M$ is \textit{slant submanifold} if for any $x\in M$ and $X_x\in T_xM$, the angle $\theta(X_x)$ between $JX_x$ and $T_xM$ (which agrees with the angle between $JX_x$ and $TX_x$) is constant. In this case, we call $\theta=:\theta(X_x)$ the \textit{slant angle}. Invariant and anti-invariant submanifolds are particular cases of slant submanifolds with the slant angle $\theta=0$ and $\theta=\frac{\pi}{2}$, respectively. A slant immersion which is not invariant nor anti-invariant is called \textit{proper slant}.

For any nonzero tangent vector $X_x$ of $T_xM$, the cosine of the slant angle $\theta$ can be expressed as:
\begin{equation}
\cos(\theta(X_x)):=\frac{g(JX_x,TX_x)}{\sqrt{g(JX_x,JX_x)}\sqrt{g(TX_x,TX_x)}}=
\frac{||TX_x||}{||JX_x||}.
\end{equation}

Properties of slant submanifolds will be stated in the next Propositions.

\begin{proposition}\label{p5}
Let $M$ be an isometrically immersed submanifold of the metallic Riemannian manifold $(\bar{M},J, g)$. If $M$ is slant with the slant angle $\theta$, then:
\begin{equation}
g(TX,TY)=\cos^2(\theta)[pg(X,JY)+qg(X,Y)]
\end{equation}
and
\begin{equation}
g(NX,NY)=\sin^2(\theta)[pg(X,JY)+qg(X,Y)],
\end{equation}
for any $X$, $Y\in \Gamma(TM)$.
\end{proposition}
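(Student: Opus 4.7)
My plan is to first establish the diagonal ($X=Y$) case directly from the definition of the slant angle, then polarize to obtain the bilinear identity, and finally use the orthogonal decomposition $JX = TX + NX$ to deduce the formula for $g(NX,NY)$.

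For the diagonal case, I would start by computing $g(JX,JX) = g(X, J^{2}X) = p\,g(X,JX) + q\,g(X,X)$, using $J^{2} = pJ + qI$ together with the metallic compatibility $g(JA,B) = g(A,JB)$. Since $JX = TX + NX$ with $NX$ normal and $X$ tangent, the normal part drops out and $g(X,JX) = g(X,TX)$. The slant hypothesis says $\cos\theta = \|TX\|/\|JX\|$ with $\theta$ constant, hence $g(TX,TX) = \cos^{2}\theta \cdot g(JX,JX)$, which gives
\[
g(TX,TX) = \cos^{2}\theta\,\bigl[p\,g(X,TX) + q\,g(X,X)\bigr].
\]

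Next I polarize. Replacing $X$ by $X+Y$, expanding by bilinearity, and using the $g$-symmetry of $T$ established earlier (the relation $g(TX,Y) = g(X,TY)$) to merge $g(X,TY) + g(Y,TX) = 2g(X,TY)$, then subtracting the identities at $X$ and at $Y$ and dividing by $2$, I obtain
\[
g(TX,TY) = \cos^{2}\theta\,\bigl[p\,g(X,TY) + q\,g(X,Y)\bigr].
\]
Since $g(X,JY) = g(X,TY) + g(X,NY) = g(X,TY)$ (the second term vanishes as $X$ is tangent and $NY$ is normal), this is exactly the stated formula for $g(TX,TY)$.

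For the second identity, the orthogonality of tangent and normal bundles gives $g(JX,JY) = g(TX,TY) + g(NX,NY)$, while the metallic identity produces $g(JX,JY) = g(X,J^{2}Y) = p\,g(X,JY) + q\,g(X,Y)$. Subtracting the formula just proved for $g(TX,TY)$ from $g(JX,JY)$ leaves $(1-\cos^{2}\theta)[p\,g(X,JY) + q\,g(X,Y)] = \sin^{2}\theta\,[p\,g(X,JY) + q\,g(X,Y)]$, which is the desired expression. The only subtlety — and the main (mild) obstacle — is the polarization step: one must verify that $\cos^{2}\theta$ is a genuine constant that can be pulled outside the bilinearization, which is exactly guaranteed by the slant hypothesis that $\theta(X_{x})$ is independent of $X_{x}$ and $x$.
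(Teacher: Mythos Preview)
Your proof is correct and follows essentially the same approach as the paper: establish the diagonal identity $g(TX,TX)=\cos^{2}\theta\,g(JX,JX)$ from the slant definition, polarize to pass to the bilinear form, apply $J^{2}=pJ+qI$, and then subtract from $g(JX,JY)$ to obtain the normal formula. The only cosmetic difference is that the paper polarizes before expanding $g(JX,JY)$ via the metallic relation (so the $g$-symmetry of $T$ is not explicitly invoked), whereas you expand first and then polarize; the arguments are otherwise identical.
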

\begin{proof}
Taking $X+Y$ in $g(TX,TX)=\cos^2(\theta)g(JX,JX)$ we easily obtain $g(TX,TY)=\cos^2(\theta)g(JX,JY)=\cos^2(\theta)[pg(X,JY)+qg(X,Y)]$. Also,  $g(NX,NY)=g(JX,JY )-g(TX,TY)=\sin^2(\theta)[pg(X,JY)+qg(X,Y)]$.
\end{proof}

A characterization in terms of the $T$-operator of a slant submanifold of a metallic Riemannian manifold is now given:

\begin{proposition}\label{p4}
Let $M$ be an isometrically immersed submanifold of the metallic Riemannian manifold $(\bar{M},J, g)$. Then $M$ is slant if and only if there exists a real number $\lambda\in[0,1]$ such that
\begin{equation}
T^2=\lambda(pT+qI_{\Gamma(TM)}).
\end{equation}
\end{proposition}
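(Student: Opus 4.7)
The plan is to pass between the slant condition, stated as constancy of $\cos\theta(X_x) = \|TX_x\|/\|JX_x\|$, and the algebraic identity for $T^2$ by exploiting the $g$-symmetry of $T$ together with the defining relation $J^2 = pJ + qI$. The key bridging observation is that for any $X \in \Gamma(TM)$ one has $g(X,JX) = g(X,TX) + g(X,NX) = g(X,TX)$, since $NX \in \Gamma(TM^\perp)$, and consequently
\begin{equation*}
\|JX\|^2 = g(X, J^2 X) = p\,g(X,JX) + q\,g(X,X) = p\,g(X,TX) + q\,g(X,X).
\end{equation*}
Meanwhile $\|TX\|^2 = g(X, T^2 X)$ by the $g$-symmetry of $T$ (Proposition~\ref{pc}(7)).

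For the forward implication, assume $M$ is slant with angle $\theta$ and set $\lambda := \cos^2\theta \in [0,1]$. Proposition~\ref{p5} gives $g(TX,TY) = \lambda\,[p\,g(X,JY) + q\,g(X,Y)]$ for all $X,Y \in \Gamma(TM)$. Replacing $g(X,JY)$ by $g(X,TY)$ (again because $NY \perp X$) and rewriting the left-hand side as $g(X, T^2 Y)$ via $g$-symmetry of $T$, one obtains
\begin{equation*}
g\bigl(X,\, T^2 Y - \lambda(pTY + qY)\bigr) = 0 \qquad \text{for all } X,Y \in \Gamma(TM).
\end{equation*}
Non-degeneracy of $g$ on $TM$ then forces $T^2 = \lambda(pT + qI_{\Gamma(TM)})$.

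For the converse, suppose $T^2 = \lambda(pT + qI_{\Gamma(TM)})$ for some $\lambda \in [0,1]$. Taking a nonzero $X_x \in T_xM$ and applying $g(X_x, \cdot)$,
\begin{equation*}
\|TX_x\|^2 = g(X_x, T^2 X_x) = \lambda\bigl[p\,g(X_x, TX_x) + q\,g(X_x,X_x)\bigr] = \lambda\, \|JX_x\|^2,
\end{equation*}
by the bridging identity above. Hence
\begin{equation*}
\cos^2\theta(X_x) = \frac{\|TX_x\|^2}{\|JX_x\|^2} = \lambda,
\end{equation*}
which is independent of $x$ and $X_x$, so $M$ is slant with $\cos^2\theta = \lambda$.

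I do not foresee a real obstacle; the only subtle points are the consistent use of $g(X,JY) = g(X,TY)$ for $X,Y \in \Gamma(TM)$ and the invocation of non-degeneracy of $g$ to pass from a bilinear identity to the tensorial identity. The requirement $\lambda \in [0,1]$ in the converse is automatically compatible with the computed ratio $\|TX_x\|^2/\|JX_x\|^2$, which lies in $[0,1]$ because $\|JX_x\|^2 = \|TX_x\|^2 + \|NX_x\|^2$; moreover $\|JX_x\| > 0$ for $X_x \neq 0$ since $J$ is invertible (from $J(J - pI) = qI$ with $q \in \mathbb{N}^*$).
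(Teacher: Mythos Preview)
Your proof is correct and follows essentially the same route as the paper's: the forward direction invokes Proposition~\ref{p5}, replaces $g(X,JY)$ by $g(X,TY)$, and uses the $g$-symmetry of $T$ to rewrite $g(TX,TY)$ as $g(X,T^2Y)$; the converse computes $\|TX_x\|^2/\|JX_x\|^2=\lambda$ via the same bridging identity. Your write-up is in fact more explicit than the paper's (which tersely asserts $\cos^2\theta(X)=\lambda$ in the converse without spelling out the computation, and writes $g(pJX+qX,Y)$ where $g(pTX+qX,Y)$ is what is really meant).
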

\begin{proof}
If $M$ is slant, $\theta$ is constant and using the Proposition \ref{p5} we get:
$$g(T^2X,Y)=g(J(TX),Y)=g(TX,JY)=g(TX,TY)=$$$$=\cos^2(\theta)[pg(X,JY)+qg(X,Y)]=
\cos^2(\theta)g(pJX+qX,Y),$$
for any $X$, $Y\in \Gamma(TM)$.

Conversely, if there exists a real number $\lambda\in[0,1]$ such that $T^2=\lambda(pT+qI_{\Gamma(TM)})$ follows $\cos^2(\theta(X))=\lambda$, so $\lambda$ does not depend on $X$.
\end{proof}

\begin{proposition}
Let $M$ be an isometrically immersed submanifold of the metallic Riemannian manifold $(\bar{M},J, g)$. If $M$ is slant with the slant angle $\theta$, then:
\begin{equation}
(\nabla_XT^2)Y=p\cos^2(\theta)(\nabla_XT)Y,
\end{equation}
for any $X$, $Y\in \Gamma(TM)$.
\end{proposition}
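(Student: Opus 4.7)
The plan is short and direct: use the algebraic characterization of slant submanifolds from Proposition \ref{p4} and differentiate it. By Proposition \ref{p4}, $M$ being slant with angle $\theta$ is equivalent to the identity
\[
T^2 = \cos^2(\theta)\bigl(pT + qI_{\Gamma(TM)}\bigr),
\]
since the constant $\lambda$ produced there is precisely $\cos^2(\theta)$ (compare with Proposition \ref{p5}, where the coefficient $\cos^2(\theta)$ emerges from $g(TX,TY)$).

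Next, I would apply $\nabla_X$, evaluated on an arbitrary $Y \in \Gamma(TM)$, to both sides. Since $M$ is slant, the angle $\theta$ is constant on $M$, hence $\cos^2(\theta)$ is a constant scalar and can be pulled outside the covariant derivative. The identity operator $I_{\Gamma(TM)}$ is parallel with respect to any linear connection ($(\nabla_X I_{\Gamma(TM)})Y = \nabla_X Y - \nabla_X Y = 0$), so its contribution vanishes. What remains is
\[
(\nabla_X T^2)Y \;=\; \cos^2(\theta)\,\bigl[p\,(\nabla_X T)Y + q\,(\nabla_X I_{\Gamma(TM)})Y\bigr] \;=\; p\cos^2(\theta)\,(\nabla_X T)Y,
\]
which is exactly the asserted formula.

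There is no real obstacle here; the only point that deserves explicit mention in the write-up is that the $\cos^2(\theta)$ factor is truly constant (a defining feature of slant, not merely a pointwise bound), so it commutes with $\nabla_X$, and that $\nabla I_{\Gamma(TM)} = 0$. Both are standard, so the proof reduces to one line of $(1,1)$-tensor calculus applied to the equation of Proposition \ref{p4}.
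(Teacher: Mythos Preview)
Your proposal is correct and follows essentially the same approach as the paper: both arguments apply the identity $T^2=\cos^2(\theta)(pT+qI_{\Gamma(TM)})$ from Proposition~\ref{p4} and differentiate, using that $\cos^2(\theta)$ is constant and that the identity endomorphism is parallel. The paper writes this out by evaluating the identity at $\nabla_XY$ and at $Y$ and subtracting, which is exactly your one-line tensor computation made explicit.
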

\begin{proof}
We have:
$$T^2(\nabla_XY)=\cos^2(\theta)[pT(\nabla_XY)+q\nabla_XY]$$
and
$$\nabla_XT^2Y=\cos^2(\theta)[p\nabla_XTY+q\nabla_XY].$$

It follows:
$$(\nabla_XT^2)Y:=\nabla_XT^2Y-T^2(\nabla_XY)=p\cos^2(\theta)(\nabla_XT)Y.$$
\end{proof}

We deduce that:
\begin{corollary}
If $M$ is an isometrically immersed slant submanifold of the metallic Riemannian manifold $(\bar{M},J, g)$ with the slant angle $\theta$, then $\nabla T^2=0$ if and only if $M$ is anti-invariant or $(M,T,g)$ is locally metallic Riemannian manifold.
\end{corollary}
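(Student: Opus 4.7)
The plan is to derive this as an essentially immediate corollary of the preceding proposition, together with the hypothesis that $p \in \mathbb{N}^*$ built into the definition of a metallic structure.

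First I would observe that the preceding proposition gives the pointwise identity
\begin{equation*}
(\nabla_X T^2)Y = p\cos^2(\theta)\,(\nabla_X T)Y
\end{equation*}
for every $X, Y \in \Gamma(TM)$. Hence $\nabla T^2 = 0$ is equivalent to the tensorial equation $p\cos^2(\theta)\,\nabla T = 0$. Since $p \in \mathbb{N}^*$ by the definition of a metallic Riemannian structure, we have $p \neq 0$, and the slant angle $\theta$ is by assumption constant on $M$, so the equation reduces to the disjunction $\cos^2(\theta) = 0$ or $\nabla T = 0$.

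Next I would split into the two cases. If $\cos(\theta) = 0$, then $\theta = \pi/2$, which by definition is exactly the anti-invariant case (and indeed $T \equiv 0$, so $\nabla T^2 = 0$ trivially). If instead $\nabla T = 0$, then the Levi-Civita connection on $(M,g)$ is a $T$-connection in the sense of Definition~2.3(i); combined with the compatibility $g(TX,Y)=g(X,TY)$ already recorded in Proposition~\ref{pc}, this places $(M,T,g)$ in the setting of Definition~2.3(ii), that is, $(M,T,g)$ is a locally metallic Riemannian manifold. Conversely, either case forces $\cos^2(\theta)\,\nabla T = 0$ and hence $\nabla T^2 = 0$, closing the equivalence.

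There is no genuine obstacle here; the only point worth flagging in the write-up is the use of $p \neq 0$ to cancel the factor $p\cos^2(\theta)$ into a clean dichotomy, and the observation that the $T$-analogue of the metallic equation needed to invoke Definition~2.3(ii) is precisely the content of Proposition~\ref{pc}(1) (with $\eta_\alpha, \xi_\alpha$ handled automatically in the non-anti-invariant branch, where $T$ carries the relevant structural information).
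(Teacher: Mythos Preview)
Your argument is correct and matches the paper exactly: the corollary is stated there without proof (preceded only by ``We deduce that:''), and your derivation from $(\nabla_X T^2)Y = p\cos^2(\theta)(\nabla_X T)Y$ together with $p\in\mathbb{N}^*$ is precisely the intended one-line deduction.

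One small clarification on your closing paragraph: Proposition~\ref{pc}(1) is not quite the right reference for the polynomial identity satisfied by $T$. For a slant submanifold the relevant fact is Proposition~\ref{p4}, which gives $T^2=\cos^2(\theta)(pT+qI)$; thus $T$ satisfies a metallic-type equation with parameters $p\cos^2(\theta)$ and $q\cos^2(\theta)$, which need not lie in $\mathbb{N}^*$ unless $\theta=0$. The paper's phrase ``$(M,T,g)$ is locally metallic Riemannian manifold'' should therefore be read as shorthand for $\nabla T=0$ together with the compatibility $g(TX,Y)=g(X,TY)$ from Proposition~\ref{pc}(7), without insisting on integer coefficients. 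This is a terminological looseness in the paper rather than a gap in your argument, and your dichotomy stands as written.
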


\begin{proposition}
Let $M$ be an isometrically immersed $n$-dimensional submanifold of codimension $r$ of the $(n+r)$-dimensional locally metallic Riemannian manifold $(\bar{M},J, g)$ and $\Sigma:=(T,g,\eta_{\alpha},\xi_{\alpha},(a_{\alpha\beta}))_{1\leq \alpha,\beta\leq r}$ is the induced structure on $M$. Then:
\begin{equation}
(\nabla_XT^2)Y=p\cos^2(\theta)\sum_{\alpha=1}^r[\eta_{\alpha}(Y)A_{N_{\alpha}}X+h_{\alpha}(X,Y)\xi_{\alpha}],
\end{equation}
for any $X$, $Y\in \Gamma(TM)$.

Moreover, if $T^2$ is Codazzi, then on $M$:
\begin{equation}
\sum_{\alpha=1}^r(\eta_{\alpha}\otimes A_{N_{\alpha}}-A_{N_{\alpha}}\otimes \eta_{\alpha})=0.
\end{equation}
\end{proposition}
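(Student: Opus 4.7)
The plan is to combine the two preceding identities: the one from the immediately previous proposition, namely $(\nabla_XT^2)Y = p\cos^2(\theta)(\nabla_XT)Y$ (valid for slant $M$, which is the implicit hypothesis since $\cos^2(\theta)$ appears), and the formula from Proposition \ref{t}(1), namely $(\nabla_XT)Y=\sum_{\alpha=1}^r[\eta_{\alpha}(Y)A_{N_{\alpha}}X+h_{\alpha}(X,Y)\xi_{\alpha}]$, which applies because $\bar{M}$ is locally metallic. Substituting the second into the first yields the claimed expression for $(\nabla_XT^2)Y$ in a single line. This step is essentially mechanical.

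For the Codazzi part, I would recall that $T^2$ being Codazzi means $(\nabla_XT^2)Y=(\nabla_YT^2)X$ for all $X,Y\in\Gamma(TM)$. Subtracting the formula just derived with $(X,Y)$ and with $(Y,X)$, and using symmetry of the second fundamental form $h_\alpha(X,Y)=h_\alpha(Y,X)$, the $\xi_\alpha$-terms cancel and one is left with
\[
0=p\cos^2(\theta)\sum_{\alpha=1}^r\bigl[\eta_{\alpha}(Y)A_{N_{\alpha}}X-\eta_{\alpha}(X)A_{N_{\alpha}}Y\bigr].
\]
Since $p\in\mathbb{N}^*$ so $p\neq 0$, and we are implicitly in a case where $\cos^2(\theta)\neq 0$ (otherwise $M$ is anti-invariant, $T=0$, and the stated identity is vacuously recorded), we can divide out the scalar factor. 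Rewriting the bracketed expression as the evaluation of $\sum_{\alpha=1}^r(\eta_{\alpha}\otimes A_{N_{\alpha}}-A_{N_{\alpha}}\otimes\eta_{\alpha})$ at $(X,Y)$ (with the conventions $(\omega\otimes S)(X,Y)=\omega(X)\,S(Y)$) gives the conclusion.

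The only genuinely subtle point, and the one I would take a moment to justify, is the cancellation of the $\xi_\alpha$-contribution in the skew-symmetrization: this depends solely on $h_\alpha(X,Y)=h_\alpha(Y,X)$, which is built into the Gauss formula. After that, the argument is direct substitution and bookkeeping; there is no genuine analytic obstacle.
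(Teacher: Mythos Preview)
Your proposal is correct and matches the paper's intended argument: the paper states this proposition without an explicit proof precisely because it follows immediately by substituting Proposition~\ref{t}(1) into the preceding identity $(\nabla_XT^2)Y=p\cos^2(\theta)(\nabla_XT)Y$, and then skew-symmetrizing for the Codazzi part exactly as you describe. Your observation that the slant hypothesis is implicit (via the appearance of $\theta$) and your handling of the case $\cos^2(\theta)=0$ are appropriate; the only cosmetic point is that, depending on the convention for $\eta_\alpha\otimes A_{N_\alpha}$ versus $A_{N_\alpha}\otimes\eta_\alpha$, your bracketed expression is the negative of the displayed tensor evaluated at $(X,Y)$, but this is immaterial since the conclusion is that it vanishes.
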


From Proposition \ref{pc} and Proposition \ref{p4}, we can characterize $T^2$ in terms of the $\Sigma$-structure induced on $M$ as follows.
\begin{proposition}
Let $M$ be an isometrically immersed $n$-dimensional submanifold of codimension $r$ of the $(n+r)$-dimensional metallic Riemannian manifold $(\bar{M},J, g)$ and $\Sigma:=(T,g,\eta_{\alpha},\xi_{\alpha},(a_{\alpha\beta}))_{1\leq \alpha,\beta\leq r}$ is the induced structure on $M$. If $M$ is proper slant submanifold with the slant angle $\theta$, then:
\begin{equation}\label{e7}
T^2=\frac{1}{\tan^2 (\theta)}\sum_{\alpha=1}^r\eta_{\alpha}\otimes \xi_{\alpha}.
\end{equation}
\end{proposition}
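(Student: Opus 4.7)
The plan is to combine the two algebraic identities for $T^2$ that are already at our disposal: the one from Proposition \ref{pc}, which expresses $T^2$ in terms of the induced $\Sigma$-structure, and the one from Proposition \ref{p4}, which gives the slant characterization $T^2=\cos^{2}(\theta)(pT+qI_{\Gamma(TM)})$. Subtracting these should isolate $\sum_{\alpha=1}^{r}\eta_{\alpha}\otimes\xi_{\alpha}$ in terms of $T^{2}$, and the factor $\tan^{2}(\theta)$ will appear from the identity $1-\cos^{2}(\theta)=\sin^{2}(\theta)$ after dividing by $\cos^{2}(\theta)$.

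Concretely, I would first apply Proposition \ref{pc}(1) to write
\[
\sum_{\alpha=1}^{r}\eta_{\alpha}\otimes\xi_{\alpha}=pT+qI_{\Gamma(TM)}-T^{2}.
\]
Then I would use Proposition \ref{p4} in the form $pT+qI_{\Gamma(TM)}=\tfrac{1}{\cos^{2}(\theta)}T^{2}$, which is valid because properness of the slant immersion excludes $\theta=\pi/2$ (so $\cos^{2}(\theta)\neq 0$). Substituting this into the previous display gives
\[
\sum_{\alpha=1}^{r}\eta_{\alpha}\otimes\xi_{\alpha}=\frac{1-\cos^{2}(\theta)}{\cos^{2}(\theta)}T^{2}=\tan^{2}(\theta)\,T^{2}.
\]
Finally, since the immersion is proper slant we also have $\theta\neq 0$, so $\tan^{2}(\theta)\neq 0$, and we can divide to obtain the claimed identity \eqref{e7}.

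The argument is essentially a one-line algebraic manipulation once both characterizations of $T^{2}$ are invoked, so there is no real obstacle: the only point requiring a brief justification is the properness hypothesis, which is exactly what guarantees that $\cos^{2}(\theta)$ and $\tan^{2}(\theta)$ are both nonzero and hence that the divisions above are legitimate.
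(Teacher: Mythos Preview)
Your proposal is correct and follows exactly the approach indicated in the paper, which explicitly states that the result follows from Proposition~\ref{pc} and Proposition~\ref{p4}; you have simply written out the one-line algebra that those two identities entail, together with the observation that properness ($\theta\neq 0,\pi/2$) makes the requisite divisions by $\cos^{2}(\theta)$ and $\tan^{2}(\theta)$ legitimate.
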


\begin{remark}
Denote by $\mathcal{D}=\cap_{\alpha=1}^r \mathcal{D}_{\alpha}$ the intersection of the distributions $\mathcal{D}_{\alpha}:=\ker \eta_{\alpha}$, $1\leq \alpha\leq r$ and by $\mathcal{D}^{\perp}$ its orthogonal complement in $M$. Then:

i) $\mathcal{D}_{\alpha}$ is integrable if and only if $\eta_{\alpha}$ is closed;

ii) $T^2|_{\Gamma(\mathcal{D})}=0$ (from (\ref{e7}));

iii) $\xi_{\alpha}\in \mathcal{D}^{\perp}$, for any $1\leq \alpha\leq r$;

iv) the distribution $\mathcal{D}$ is $T$-invariant because
$$g(TX,\xi_{\alpha})=g(JX,\xi_{\alpha})=g(X,J\xi_{\alpha})=g(X,JN_{\alpha})-
\sum_{\beta=1}^ra_{\alpha\beta}g(X,N_{\beta})=$$$$=g(X,\xi_{\alpha})=0,$$
for any $X\in \Gamma(\mathcal{D})$;

v) if $(\bar{M},J, g)$ is a locally metallic Riemannian manifold, then for $X$, $Y\in \Gamma(\mathcal{D})$ we have $(\nabla_XT)Y=\sum_{\alpha=1}^rh_{\alpha}(X,Y)\xi_{\alpha}\in \Gamma(\mathcal{D}^{\perp})$ and $T$ is Codazzi.
\end{remark}

\section{Examples}

\begin{example}
Consider the Euclidean space $E^{2a+b}$ of dimension $(2a+b)$, $a$, $b \in \mathbb{N}^{*}$. For an almost product structure $F:E^{2a+b}\rightarrow E^{2a+b}$, $F(X^{i},Y^{i},Z^{j})=(X^{i},-Y^{i},Z^{j})$, let $J_{\lambda}:E^{2a+b}\rightarrow E^{2a+b}$ be the $(1,1)$-tensor field defined by:
\begin{equation}
J_{\lambda}(X^{i},Y^{i},Z^{j})=\frac{p}{2}(X^{i},Y^{i},Z^{j}) + \lambda \frac{\sqrt{\Delta}}{2}F(X^{i},Y^{i},Z^{j}),
\end{equation}
for $\lambda\in \{-1,1\}$, $(X^{i},Y^{i}, Z^j):=(X^{1},...,X^{a},Y^{1},...,Y^{a},Z^{1},...,Z^{b})$ from $E^{2a+b}$, $i \in \{1,...,a\}$, $j \in \{1,...,b\}$, where $\sigma=\sigma_{p,q}=\frac{p+\sqrt{\Delta}}{2}$ is a metallic number, $\overline{\sigma}=p-\sigma=\frac{p-\sqrt{\Delta}}{2}$ and $\Delta=p^{2}+4q$, for $p$ and $q$ positive integer numbers.

For $\lambda=1$:
\begin{equation}
J_{1}(X^{i},Y^{i},Z^{j})=(\sigma X^{i},\overline{\sigma} Y^{i}, \sigma Z^{j})
\end{equation}
and for $\lambda=-1$:
\begin{equation}
J_{-1}(X^{i},Y^{i},Z^{j})=(\overline{\sigma} X^{i},\sigma Y^{i}, \overline{\sigma} Z^{j}).
\end{equation}

$J_{1}$ is a metallic structure on $E^{2a+b}$ because:
$$J_{1}^{2}(X^{i},Y^{i},Z^{j})=(\sigma^{2} X^{i},\overline{\sigma}^{2} Y^{i}, \sigma^{2} Z^{j})=
((p\sigma+q) X^{i},(p\overline{\sigma}+q) Y^{i}, (p\sigma+q) Z^{j})=$$$$=p(\sigma X^{i},\overline{\sigma} Y^{i}, \sigma Z^{j})+q(X^{i},Y^{i},Z^{j})=
(pJ_1+qI)(X^{i},Y^{i},Z^{j}).$$

In the same manner we can find that $J_{-1}^{2}=pJ_{-1}+qI$, which implies that $J_{-1}$ is a metallic structure on $E^{2a+b}$.

In the following, we denote by $J_{\lambda}:=J$, where $\lambda \in \{-1,1\}$.

For any $(X^{i},Y^{i}, Z^j)$, $(X'^{i},Y'^{i}, Z'^j) \in \Gamma(TE^{2a+b})$, we have:
$$
\langle J(X^{i},Y^{i},Z^{j}) , (X'^{i},Y'^{i},Z'^{j})\rangle=\langle(X^{i},Y^{i},Z^{j}),J(X'^{i},Y'^{i},Z'^{j})\rangle.
$$

Thus the scalar product $\langle\cdot,\cdot\rangle$ on $E^{2a+b}$ is
$J$-compatible and $(E^{2a+b},\langle\cdot,\cdot\rangle,J)$ is a metallic Riemannian manifold.

Starting from the equation of the sphere $S^{2a+b-1}(R)$:
$$
\sum_{i=1}^{a}(x^{i})^{2}+\sum_{i=1}^{a}(y^{i})^{2}+\sum_{j=1}^{b}(z^{j})^{2}=R^{2},
$$
where $R$ is the radius of the sphere and $(x^{1},...,x^{a},y^{1},...,y^{a},z^{1},...,z^{b}):=(x^{i},y^{i},z^{j})$, $i \in \{1,...,a\}$, $j \in \{1,...,b\}$, are
the coordinates of a point of $S^{2a+b-1}(R)$, we use the
following notations:
$$
\sum_{i=1}^{a}(x^{i})^{2}=r_{1}^{2},\:
\sum_{i=1}^{a}(y^{i})^{2}=r_{2}^{2},\:
\sum_{j=1}^{b}(z^{j})^{2}=r_{3}^{2},\:
r_{1}^{2}+r_{2}^{2}:=r^{2}, \:
r^{2}+r_{3}^{2}=R^{2}.
$$

Remark that
$S^{2a-1}(r) \times S^{b-1}(r_{3})$ is a submanifold of
codimension 1 in $S^{2a+b-1}(R)$ and we have the successive
embeddings: $$S^{2a-1}(r) \times S^{b-1}(r_{3})
 \hookrightarrow S^{2a+b-1}(R) \hookrightarrow E^{2a+b}.$$

The tangent space in a point $(x^{i},y^{i},z^{j})$ at the product
of spheres $S^{2a-1}(r) \times S^{b-1}(r_{3})$ is:
$$
T_{(x^{1},...,x^{a},y^{1},...,y^{a},\underbrace{o,...,o}_{b})} S^{2a-1}(r)  \oplus  T_{(\underbrace{o,...,o}_{2a},z^{1},...,z^{b})}S^{b-1}(r_{3}).
$$

A vector $(X^{1},...,X^{a},Y^{1},...,Y^{a})$ from
$T_{(x^{1},...,x^{a},y^{1},...,y^{a})}E^{2a}$ is tangent to $S^{2a-1}(r)$ if and only if
$\sum_{i=1}^{a}(x^{i}X^{i}+y^{i}Y^{i})=0$ and it can be identified with
$(X^{1},...,X^{a},Y^{1},...,Y^{a},\underbrace{0,...,0}_{b})$ from $E^{2a+b}$.

A vector $(Z^{1},...,Z^{b})$ from
$T_{(z^{1},...,z^{b})}E^{b}$ is tangent to $S^{b-1}(r_{3})$ if and
only if $\sum_{j=1}^{b}z^{j}Z^{j}=0$ and it can be identified with $(\underbrace{0,...,0}_{2a},Z^{1},...,Z^{b})$ from $E^{2a+b}$.

Consequently, for any point $(x^{i},y^{i},z^{j}) \in S^{2a-1}(r) \times S^{b-1}(r_{3})$,
we have:
$$(X^{i},Y^{i},Z^{j}) \in T_{(x^{1},...,x^{a},y^{1},...,y^{a},z^{1},...,z^{b})}(S^{2a-1}(r) \times S^{b-1}(r_{3}))$$
if and only if
$
\sum_{i=1}^{a}(x^{i}X^{i}+y^{i}Y^{i})=0$ and $\sum_{j=1}^{b}z^{j}Z^{j}=0
$.

Using the unit normal vector fields on the sphere $S^{2a+b-1}(R)$ and $S^{2a-1}(r) \times S^{b-1}(r_{3})$, respectively, given by:
$$
N_{1}:= \frac{1}{R}(x^{i},y^{i},z^{j}), \:\:N_{2}=\frac{1}{R}\left(\frac{r_{3}}{r}x^{i},\frac{r_{3}}{r}y^{i},-\frac{r}{r_{3}}z^{j}\right),
$$
for any $i \in \{1,...,a\}$, $j \in \{1,...,b\}$, we obtain an orthonormal basis $\{N_{1},N_{2}\}$ of \linebreak
$
T_{(x^{i},y^{i},z^{j})}^{\perp}(S^{2a-1}(r) \times S^{b-1}(r_{3}))
$, for $N_{1}$ the unit vector field normal at $S^{2a+b-1}(R)$ and also, normal at $S^{2a-1}(r) \times S^{b-1}(r_{3})$.

For $J:=J_{1}$:
$$J N_{1}=\frac{1}{R}(\sigma x^{i},\overline{\sigma} y^{i}, \sigma z^{j}).$$

From the decomposition of $JN_{k}=\xi_{k}+a_{k1}N_{1}+a_{k2}N_{2}, \ \ k \in \{1,2\}$, into the tangential and normal components at $S^{2a+b-1}(R)$ and from $a_{kt}=\langle JN_{k},N_{t}\rangle$, $k$, $t \in \{1,2\}$, we
obtain:
\begin{equation}
a_{11}=\frac{\sigma r_{1}^{2}+\overline{\sigma}r_{2}^{2} + \sigma r_{3}^{2}}{R^{2}}, \:
a_{12}=a_{21}=-\frac{(\sigma-\overline{\sigma})r_{3}r_{2}^{2}}{r R^{2}}, \:
a_{22}=\frac{r_{3}(\sigma r_{1}^{2}+\overline{\sigma}r_{2}^{2} + \sigma r^{2})}{r R^{2}}
\end{equation}
and the matrix $\mathcal{A}:=(a_{\alpha\beta})_{2}$ is given by:
\begin{equation}\label{e11}
\mathcal{A}= \frac{1}{r R^{2}}\begin{pmatrix}
r(\sigma r_{1}^{2}+\overline{\sigma} r_{2}^{2} + \sigma r_{3}^{2}) & (\overline{\sigma}-\sigma)r_{3}r_{2}^{2} \\
   (\overline{\sigma}-\sigma)r_{3}r_{2}^{2} & r_{3}(\sigma r_{1}^{2}+\overline{\sigma}r_{2}^{2} + \sigma r^{2})
\end{pmatrix}.
\end{equation}

The tangential components $\xi_{k}=J N_{k}-a_{k1}N_{1}-a_{k2}N_{2}$ of $J N_{k}$, $k \in \{1,2\}$, at the sphere $S^{2a+b-1}(R)$ are given by:
\begin{equation}\label{e12}
\xi_{1}=\left(\frac{(\sigma-\overline{\sigma})r_{2}^{2}}{r^{2}} x^{i}, -\frac{(\sigma-\overline{\sigma})r_{1}^{2}}{r^{2}}y^{i},0\right)
\end{equation}
and
\begin{equation}\label{e13}
\xi_{2}=\frac{1}{rR^{3}}\left(\left(r_{3}\tau-\frac{p r_{2}^{2}r_{3}^{2}}{r}\right)x^{i},\left(r_{3}\tau-\frac{p r_{2}^{2}r_{3}^{2}}{r}\right)y^{i},\left(r \tau -\sqrt{\Delta} r_{2}^{2}r_{3}-\frac{rR^{2}\sigma}{r_{3}^{2}}\right)z^{j}\right),
\end{equation}
for $i \in \{1,...,a\}$ and $j \in \{1,...,b\}$, where $\tau = \sigma r_{1}^{2}+\overline{\sigma}r_{2}^{2}+\sigma r_{3}^{2}$.

If we decompose $J(X^{i},Y^{i},Z^{j})$ into the tangential and normal components
at the sphere $S^{2a+b-1}$, where $(X^{i},Y^{i},Z^{j})$ is a tangent vector field at
$S^{2a+b-1}(R)$, we obtain:
$$J (X^{i},Y^{i},Z^{j}) = T(X^{i},Y^{i},Z^{j})+ \eta_{1}(X^{i},Y^{i},Z^{j})N_{1} + \eta_{2}(X^{i},Y^{i},Z^{j})N_{2}.$$

From $\eta_{k}(X^{i},Y^{i},Z^{j})=\langle(X^{i},Y^{i},Z^{j}),\xi_{k}\rangle$, $k \in \{ 1,2\}$, we obtain:
\begin{equation}\label{e14}
\eta_{1}(X^{i},Y^{i},Z^{j})= \sqrt{\Delta}\sum_{i=1}^{a}x^{i}X^{i},\quad \eta_{2}(X^{i},Y^{i},Z^{j})=0.
\end{equation}

Moreover, we have:
\begin{equation}\label{e15}
 T(X^{i},Y^{i},Z^{j})= \left(\sigma X^{i}-\frac{\sqrt{\Delta}}{R}s x^{i},\overline{\sigma} Y^{i}-\frac{\sqrt{\Delta}}{R}s y^{i},\sigma Z^{j}-\frac{\sqrt{\Delta}}{R}s z^{j}\right),
 \end{equation}
where $s=\sum_{i=1}^{a}x^{i}X^{i}=-\sum_{i=1}^{a}y^{i}Y^{i}$ and $(X^{i},Y^{i},Z^{j})$ is a tangent vector at $S^{2a-1}(r) \times S^{b-1}(r_{3})$ in any
point $(x^{i},y^{i},z^{j})$.

Therefore, the induced structure $(T,\langle\cdot,\cdot\rangle,\xi_{1},\xi_{2}, \eta_{1},\eta_{2}, \mathcal{A})$ by the metallic structure $J:=J_{1}$ from $E^{2a+b}$ on the product of spheres $S^{2a-1}(r)\times S^{b-1}(r_3)$
in the Euclidean space $E^{2a+b}$ is given by the equations (\ref{e15}), (\ref{e12}), (\ref{e13}), (\ref{e14}) and (\ref{e11}). Similarly, we can find the induced
structure by the metallic structure $J:=J_{2}$ from $E^{2a+b}$ on the product of spheres $S^{2a-1}(r)\times S^{b-1}(r_3)$ in the Euclidean space $E^{2a+b}$.
\end{example}

\begin{example}
Like in the Golden case \cite{ch}, we construct an invariant submanifold of a metallic Riemannian manifold.

Consider the Euclidean space $E^{a+b}$ of dimension $(a+b)$, $a$, $b \in \mathbb{N}^{*}$. Let $J:E^{a+b}\rightarrow E^{a+b}$ be the $(1,1)$-tensor field defined by:
\begin{equation}\label{eq1}
J(X^{1},...,X^{a},Y^{1},...,Y^{b})=(\sigma X^{1},...,\sigma X^{a}, \overline{\sigma} Y^{1},...,\overline{\sigma} Y^{b}),
\end{equation}
for $(X^{1},...,X^{a},Y^{1},...,Y^{b}):=(X^{i},Y^{j})$ from $E^{a+b}$, $i \in \{1,...,a\}$, $j \in \{1,...,b\}$, where $\sigma=\sigma_{p,q}=\frac{p+\sqrt{\Delta}}{2}$ is a metallic number, $\overline{\sigma}=p-\sigma=\frac{p-\sqrt{\Delta}}{2}$ and $\Delta=p^{2}+4q$, for $p$ and $q$ positive integer numbers.

$J$ is a metallic structure on $E^{a+b}$ because:
$$
J^{2}(X^{i},Y^{j})=(\sigma^{2}X^{1},...,\sigma^{2}X^{a},\overline{\sigma}^{2}Y^{1},...,\overline{\sigma}^{2}Y^{b})=
$$
$$=p(\sigma X^{1},...,\sigma X^{a},\overline{\sigma}Y^{1},..., \overline{\sigma} Y^{b})+q(X^{1},...,X^{a},Y^{1},...,Y^{b})=(pJ+qI)(X^{i},Y^{j}).$$

For any $(X^{i},Y^{j})$, $(X'^{i},Y'^{j}) \in \Gamma(TE^{a+b})$, we have:
$$\langle J(X^{i},Y^{j}) , (X'^{i},Y'^{j})\rangle=\langle(X^{i},Y^{j}),J(X'^{i},Y'^{j})\rangle.$$

Thus, the scalar product $\langle\cdot,\cdot\rangle$ on $E^{a+b}$ is
$J$-compatible and $(E^{a+b},\langle\cdot,\cdot\rangle,J)$ is a metallic Riemannian manifold.

From $E^{a+b}=E^{a} \times E^{b}$, in each of the spaces $E^{a}$ and $E^{b}$ we can get the hyperspheres:
$$S^{a-1}(r_{1})=\{(x^{1},...,x^{a})\in E^{a},\sum_{i=1}^{a}(x^{i})^{2}=r_{1}^{2}\},$$ $$S^{b-1}(r_{2})=\{(y^{1},...,y^{b})\in E^{b},\sum_{j=1}^{b}(y^{j})^{2}=r_{2}^{2}\},$$
respectively. Thus, we can construct the product manifold $S^{a-1}(r_{1}) \times
S^{b-1}(r_{2})$.

Let $(x^{1},...,x^{a},y^{1},...,y^{b}):=(x^{i},y^{j})$, $i \in \{1,...,a\}$, $j \in \{1,...,b\}$, be the coordinates of a point of $S^{a-1}(r_{1}) \times S^{b-1}(r_{2})$ so that:
$$
\sum_{i=1}^{a}(x^{i})^{2}+\sum_{j=1}^{b}(y^{j})^{2}=r_{1}^{2}+r_{2}^{2}:=r^{2}.
$$

Remark that $S^{a-1}(r_{1}) \times S^{b-1}(r_{2})$ is a submanifold of codimension $2$ in $E^{a+b}$, $S^{a-1}(r_{1}) \times S^{b-1}(r_{2})$ is a submanifold of codimension $1$ in
$S^{a+b-1}(r)$ and we have the successive embeddings:
$$S^{a-1}(r_{1}) \times S^{b-1}(r_{2}) \hookrightarrow S^{a+b-1}(r) \hookrightarrow E^{a+b}.$$

The tangent space in a point $(x^{1},...,x^{a},y^{1},...,y^{b}):=(x^{i},y^{j})$ at the product
of spheres $S^{a-1}(r_{1}) \times S^{b-1}(r_{2})$ is:
$$T_{(x^{1},...,x^{a},\underbrace{o,...,o}_{b})} S^{a-1}(r_{1})  \oplus
 T_{(\underbrace{o,...,o}_{a},y^{1},...,y^{b})}S^{b-1}(r_{2}).$$

A vector $(X^{1},...,X^{a})$ from $T_{(x^{1},...,x^{a})}E^{a}$ (respectively $(Y^{1},...,Y^{b})$ from $T_{(y^{1},...,y^{b})}E^{b}$) is tangent to $S^{a-1}(r_{1})$ (respectively to $S^{b-1}(r_{2})$) if and only if
$\sum_{i=1}^{a}x^{i}X^{i}=0$ (respectively $\sum_{j=1}^{b}y^{j}Y^{j}=0$)
and it can be identified with
$(X^{1},...,X^{a},\underbrace{0,...,0}_{b})$ from $E^{a+b}$ (respectively with
 $(\underbrace{0,...,0}_{a},Y^{1},...,Y^{b})$ from $E^{a+b}$).

Consequently, for any point $(x^{i},y^{j}) \in S^{a-1}(r_{1}) \times S^{b-1}(r_{2})$, we have:
$$(X^{i},Y^{j}) \in T_{(x^{i},y^{j})}(S^{a-1}(r_{1}) \times S^{b-1}(r_{2})) \subset T_{(x^{i},y^{j})}S^{a+b-1}(r)$$
if and only if $\sum_{i=1}^{a}x^{i}X^{i}=\sum_{j=1}^{b}y^{j}Y^{j}=0$.

We consider a local orthonormal basis $\{N_{1},N_{2}\}$ of
$T_{(x^{i},y^{j})}^{\perp}(S^{a-1}(r_{1}) \times S^{b-1}(r_{2}))$ in
a point $(x^{i},y^{j}) \in S^{a-1}(r_{1}) \times S^{b-1}(r_{2})$, given by:
$$
N_{1}= \frac{1}{r}(x^{i},y^{j}), \ \ N_{2}=\frac{1}{r}\left(\frac{r_{2}}{r_{1}}x^{i},-\frac{r_{1}}{r_{2}}y^{j}\right),
$$
for any $i \in \{1,...,a\}$, $j \in \{1,...,b\}$.

From the decomposition of $JN_{k}=\xi_{k}+a_{k1}N_{1}+a_{k2}N_{2}, \ \ k \in \{1,2\}$, into the tangential and normal components at $S^{a-1}(r_{1})
\times S^{b-1}(r_{2})$ and from $a_{kt}=\langle JN_{k},N_{t}\rangle$, $k$, $t \in \{1,2\}$, we
obtain:
\begin{equation}\label{e17}
a_{11}=\frac{\sigma r_{1}^{2}+ \overline{\sigma} r_{2}^{2}}{r^{2}},
\quad a_{12}=a_{21}=\frac{r_{1}r_{2}(\sigma-\overline{\sigma})}{r^{2}}, \quad
a_{22}=\frac{\overline{\sigma}r_{1}^{2}+ \sigma r_{2}^{2}}{r^{2}}
\end{equation}
and the matrix $\mathcal{A}:=(a_{\alpha\beta})_{2}$ is given by:
\begin{equation}\label{eq5}
\mathcal{A}= \frac{1}{r^{2}}\begin{pmatrix}
(\sigma r_{1}^{2}+\overline{\sigma} r_{2}^{2}) & r_{1}r_{2}(\sigma-\overline{\sigma}) \\
   r_{1}r_{2}(\sigma-\overline{\sigma}) & \overline{\sigma}r_{1}^{2}+ \sigma r_{2}^{2}
\end{pmatrix}.
\end{equation}

We obtain:
\begin{equation}\label{eq6}
\xi_{1}= \xi_{2}= \underbrace{(0,...,0)}_{a+b}
\end{equation}
therefore:
$$
J (T_{x}^{\perp}(S^{a-1}(r_{1}) \times S^{b-1}(r_{2})))\subseteq T_{x}^{\perp}(S^{a-1}(r_{1}) \times S^{b-1}(r_{2})).
$$

From $\eta_{k}(X^{i},Y^{j})=\langle(X^{i},Y^{j}),\xi_{k}\rangle$, $k \in \{1,2\}$ and (\ref{eq6}), we obtain:
\begin{equation}\label{eq10}
\eta_{1}(X^{i},Y^{j})=\eta_{2}(X^{i},Y^{j})=0,
\end{equation}
for any tangent vector $(X^{i},Y^{j})$ on the product of spheres $S^{a-1}(r_{1}) \times S^{b-1}(r_{2})$ in a point
$(x^{i},y^{j}) \in S^{a-1}(r_{1}) \times S^{b-1}(r_{2})$.

From the decomposition of $J(X^{1},...,X^{a},Y^{1},...,Y^{b}):=J(X^{i},Y^{j})$ into the tangential and normal components at $S^{a-1}(r_{1}) \times S^{b-1}(r_{2})$ we obtain:
\begin{equation}\label{eq11}
T(X^{i},Y^{j}) = J(X^{i},Y^{j}).
\end{equation}

Thus, we have $J (T_{x}(S^{a-1}(r_{1}) \times S^{b-1}(r_{2})))\subseteq T_{x}(S^{a-1}(r_{1}) \times S^{b-1}(r_{2}))$ and
$$T^{2}(X^{i},Y^{j})=pT(X^{i},Y^{j})+q(X^{i},Y^{j})$$
and we obtain the induced structure
$(T,\langle\cdot,\cdot\rangle,0,0,0,0,\mathcal{A})$ on the product of spheres $S^{a-1}(r_{1}) \times S^{b-1}(r_{2})$ by the metallic structure $(J,\langle\cdot,\cdot\rangle)$ on $E^{a+b}$.
Also, $(T,\langle\cdot,\cdot\rangle)$ is a metallic Riemannian structure on $S^{a-1}(r_{1}) \times S^{b-1}(r_{2})$ and we remark that $S^{a-1}(r_{1}) \times S^{b-1}(r_{2})$ is an invariant manifold in the $(a+b)$-dimensional manifold $E^{a+b}$, $a$, $b \in \mathbb{N}^{*}$.
\end{example}

{\bf Adara M. Blaga} \\
West University of Timi\c{s}oara\\
Timi\c{s}oara, 300223, Romania\\
e-mail: adarablaga@yahoo.com

\medskip

{\bf Cristina E. Hre\c{t}canu}\\
Stefan cel Mare University of Suceava \\
Suceava, 720229, Romania\\
e-mail: criselenab@yahoo.com


\begin{thebibliography}{9}
\bibitem{blag} Blaga, A.M., Invariant, anti-invariant and slant submanifolds of a para-Kenmotsu manifold, BSG Proceedings, Vol. 24 (2017), 19--28.

\bibitem{cabr} Cabrerizo, J.L., Carriazo, A., Fernandez, L. M., Fernandez, M., Slant submanifolds in Sasakian manifolds, Glasg. Math. J. 42 (2000), 125--138.

\bibitem{chen1} Chen, B.Y., Differential geometry real submanifolds in K\"{a}hler manifold, Monatsh. Math. 91 (1981), 257--274.

\bibitem{chen2} Chen, B.Y., Geometry of Slant Submanifolds, Katholieke Universiteit Leuven, Leuven, 1990.

\bibitem{gup} Gupta, R.S., Khursheed Haider, S.M., Shahid, M.H., Slant submanifolds of a Kenmotsu manifold, Rad. Mat. 12 (2004), 205--214.

\bibitem{hrbl} Hre\c tcanu, C.-E., Blaga, A.M., Submanifolds in metallic Riemannian manifolds, accepted in Diff. Geom.-Dynamical Systems.

\bibitem{ch} Hre\c tcanu, C.-E., Crasmareanu, M., Applications of the Golden ratio on Riemannian manifolds, Turkish J. Math. 33 (2009), no. 2, 179--191.

\bibitem{c} Hre\c tcanu, C.-E., Crasmareanu, M., Metallic structures on Riemannian manifolds, Revista de la Uni\'{o}n Matem\'{a}tica Argentina 54 (2013), no. 2, 15--27.

\bibitem{cr} Hre\c tcanu, C.-E., Crasmareanu, M., On some invariant submanifolds in Riemannian manifold with golden structure, An. \c St. Univ. Al. I. Cuza Ia\c si, tom LIII (2007), no. 2, 199--211.

\bibitem{khan2} Khan, M.A., Singh, K., Kahn, V.A., Slant submanifolds of LP-contact manifolds, Diff. Geom.-Dynamical Systems 12 (2010), 102--108.

\bibitem{lott1} Lotta, A., Slant submanifolds in contact geometry, Bull. Math. Soc. Romania 39 (1996), 183--198.

\bibitem{lott2} Lotta, A., Three-dimensional slant submanifolds of K-contact manifolds, Balkan J. Geom. Appl. 3 (1998), 37--51.

\bibitem{IMihai} Mihai, I., Ro\c{s}ca, R., Verstraelen, L., Some aspects of the differential geometry of vector fields, Katholicke Universiteit
Leuven, PADGE, vol. 2, (1996).

\bibitem{per} Perkta\c s, S.Y., Kili\c{c}, E., Kele\c s, S., Slant and semi-slant submanifolds of a Lorentzian almost paracontact manifold, arXiv:1101.3156v1, 2011.

\bibitem{sah} \c Sahin, B., Slant submanifolds of an almost product Riemannian manifold, J. Korean Math. Soc. 43 (2006), 717--732.

\bibitem{v} Vanzura, J., Integrability conditions for polynomial structures, Kodai Math. Sem. Rep. 27 (1976), no. 1-2, 42--50.

\bibitem{we} We{\l}yczko, J., Slant curves in $3$-dimensional normal almost paracontact metric manifolds, Mediterr. J. Math. 11 (2014), 965--978.

\end{thebibliography}
\end{document}